\documentclass[12pt]{article}
\usepackage[hmargin={1.75truecm,2truecm},vmargin={2truecm,2truecm}]{geometry}
\usepackage[english]{babel}
\usepackage[utf8]{inputenc}
\usepackage{geometry}
\usepackage{multicol,float}
\usepackage{multirow}
\usepackage[breaklinks,colorlinks,citecolor=blue,linkcolor=blue,hyperfootnotes=false]{hyperref}
\usepackage{amsmath}
\usepackage{amsthm}
\usepackage{amsfonts}
\usepackage{amssymb}
\usepackage{graphicx}
\usepackage{xcolor}
\usepackage{arydshln}
\usepackage[affil-it]{authblk}
\usepackage{natbib}

\usepackage{xspace}
\usepackage{booktabs}
\usepackage{subcaption}
\usepackage{orcidlink}

\usepackage[textsize=small,textwidth=3.3cm]{todonotes}
\usepackage[linesnumbered,ruled,vlined]{algorithm2e}

\usepackage{tikz}
\usetikzlibrary{datavisualization.formats.functions,shadows}
\usetikzlibrary{decorations.pathreplacing}
\usetikzlibrary{shapes.geometric}
\usetikzlibrary{arrows}
\usetikzlibrary{petri,calc}

\newtheorem{theorem}{Theorem}[section]

\usepackage{cleveref}
\Crefformat{appendix}{\S#2#1#3}
\crefname{theorem}{Theorem}{Theorems}
\crefname{example}{Example}{Examples}
\crefname{observation}{Observation}{Observations}
\crefname{remark}{Remark}{Remarks}
\crefname{proposition}{Proposition}{Propositions}
\crefname{lemma}{Lemma}{Lemmas}
\crefname{corollary}{Corollary}{Corollaries}
\crefname{algocf}{Algorithm}{Algorithms}	
\crefname{table}{Table}{Tables}	
\crefname{figure}{Figure}{Figures}
\crefname{algorithm}{Algorithm}{Algorithms}
\crefname{section}{Section}{Sections}
\crefname{algorithm}{Algorithm}{Algorithms}
\bibliographystyle{model5-names-nolink}
\makeatletter
\def\blfootnote{\xdef\@thefnmark{}\@footnotetext}
\makeatother

\title{Distributed Recursion Revisited\blfootnote{
		\textit{Email addresses:} 
		\texttt{zhangweiyang@bit.edu.cn} (Wei-Yang Zhang), 
		\texttt{dongfenglian@petrochina.com.cn} (Feng-Lian Dong),
		\texttt{weizhiwei@petrochina.com.cn} (Zhi-Wei Wei),
		\texttt{wangyanru@bit.edu.cn} (Yan-Ru Wang),
		\texttt{xuzejin@petrochina.com.cn} (Ze-Jin Xu), 
		\texttt{chenweikun@bit.edu.cn} (Wei-Kun Chen), 
		\texttt{dyh@lsec.cc.ac.cn} (Yu-Hong Dai)
	}}
\author[a]{Wei-Yang Zhang\,\orcidlink{0009-0008-8476-5276}}
\author[b,c]{Feng-Lian Dong}
\author[b,c]{Zhi-Wei Wei}
\author[a]{Yan-Ru Wang\,\orcidlink{0009-0009-6256-2328}}
\author[b,c]{Ze-Jin Xu}
\author[a]{Wei-Kun Chen\,\orcidlink{0000-0003-4147-1346}}
\author[d,e]{Yu-Hong Dai\,\orcidlink{0000-0002-6932-9512}}
\affil[a]{\small School of Mathematics and Statistics, Beijing Institute of Technology, Beijing 100081, China}
\affil[b]{\small Petrochina Planning and Engineering Institute, Beijing 100086, China}
\affil[c]{\small CNPC Laboratory of Oil \& Gas Business Chain Optimization, Beijing 100086, China}

\affil[d]{\small Academy of Mathematics and Systems Science, Chinese Academy of Sciences, Beijing 100190, China}
\affil[e]{\small School of Mathematical Sciences, University of Chinese Academy of Sciences, Beijing 100049, China}
\date{}

\newcommand{\SLP}{\text{SLP}\xspace}

\newcommand{\DR}{\text{DR}\xspace}
\newcommand{\PDR}{\text{PDR}\xspace}

\newcommand{\testSLP}{\texttt{SLP}\xspace}
\newcommand{\testDR}{\texttt{DR}\xspace}
\newcommand{\testPDR}{\texttt{PDR}\xspace}
\newcommand{\testGRB}{\texttt{GRB}\xspace}

\newcommand{\tblT}{\texttt{T}\xspace}

\newcommand{\tblObj}{\texttt{Obj}\xspace}

\newcommand{\tblObjAver}{\texttt{OR}\xspace}
\newcommand{\tblIter}{\texttt{It}\xspace}
\newcommand{\tblBV}{\texttt{BObj}\xspace}

\newcommand{\tblGap}{\texttt{G\%}\xspace}
\newcommand{\tblAve}{\texttt{Aver.}\xspace}
\newcommand{\tblid}{\texttt{id-I-L-J-K}\xspace}

\newcommand{\LP}{\text{LP}\xspace}
\newcommand{\NLP}{{NLP}\xspace}

\newcommand{\BLP}{{BLP}\xspace}

\newcommand{\solver}[1]{\textsc{#1}\xspace}
\newcommand{\gurobi}{\solver{Gurobi}}

\begin{document}
	\maketitle

\begin{abstract}
The distributed recursion (\DR) algorithm is an effective method for solving the pooling problem that arises in many applications.
It is based on the well-known P-formulation of the pooling problem, which involves the flow and quality variables; and it can be seen as a variant of the successive linear programming (\SLP) algorithm, where the linear programming (\LP) approximation problem can be transformed from the \LP approximation problem derived by using the first-order Taylor series expansion technique. 
In this paper, we first propose a new nonlinear programming (\NLP) formulation for the pooling problem involving  only  the flow variables, and show that the \DR algorithm can be seen as a direct application of the \SLP algorithm to the newly proposed formulation. 
With this new useful theoretical insight, we then develop a new variant of \DR algorithm, called penalty \DR (\PDR) algorithm, based on the proposed formulation. 
The proposed \PDR algorithm is a penalty algorithm where violations of the (linearized) nonlinear constraints are penalized in the objective function of the \LP approximation problem with the penalty terms increasing when the constraint violations tend to be large.
Compared with the \LP approximation problem in the classic \DR algorithm, the \LP approximation problem in the proposed \PDR algorithm can return a solution with a better objective value, which makes it more suitable for finding high-quality solutions for the pooling problem.
Numerical experiments on  benchmark and randomly constructed instances show that the proposed \PDR algorithm is more effective than the classic \SLP and \DR algorithms in terms of finding a better solution for the pooling problem.
\end{abstract}


\section{Introduction}

The pooling problem, introduced by \citet{Haverly1978}, is a class of network flow problems on a directed graph with three layers of nodes (i.e., input nodes, pool nodes, and output nodes).
The problem involves routing flow from input nodes, potentially through intermediate pool nodes, to output nodes.
The flow originating from the input nodes has known qualities for certain attributes.
At the pool or output nodes, the incoming flows are blended, with the attribute qualities mixing linearly;
that is, the attribute qualities at a node are mixed in the same proportion as the incoming flows.
The goal of the problem is to route the flow to maximize the net profit while requiring the capacity constraints at the nodes and arcs to be satisfied and meeting the requirements of attribute qualities at the output nodes.
The pooling problem arises in a wide variety of applications, including petrochemical refining \citep{Baker1985,DeWitt1989,Amos1997}, 
wastewater treatment \citep{Galan1998,Misener2010,Kammammettu2020}, 
natural gas transportation \citep{Rios-Mercado2015,Romo2009}, 
open-pit mining \citep{Blom2014,Boland2017}, 
and animal feed problems \citep{Grothey2023}.

Due to its wide applications, various algorithms have been proposed to solve the pooling problem in the literature.
	For global algorithms that guarantee to find an optimal solution for the problem, we refer to the branch-and-bound algorithms \citep{Foulds1992,Tawarmalani2002,Misener2011} and Lagrangian-based algorithms \citep{Floudas1990,BenTal1994,Adhya1999,Almutairi2009}.
	For local algorithms that aim to find a high-quality feasible solution for the problem, 
	we refer to the discretization methods \citep{Pham2009,Dey2015,Castro2023},
	the successive linear programming (\SLP)-type algorithms \citep{Haverly1978,Lasdon1979}, 
	the variable neighborhood search \citep{Audet2004}, the construction heuristic \citep{Alfaki2014}, and the generalized Benders decomposition heuristic search \citep{Floudas1990a}.
	
The algorithm of interest in this paper is the \DR algorithm, which was first proposed in the 1970s and has been widely investigated in the literature \citep{Haverly1978,Lasdon1979,White1980,Lasdon1990,Fieldhouse1993,Kutz2014,Khor2017}.
The \DR algorithm is an \SLP-type algorithm 
which begins with an initial guess of the attribute qualities, solves an \LP approximation subproblem of the P-formulation \citep{Haverly1978} (a bilinear programming (\BLP) formulation involving flow and quality variables), and takes the optimal solution of the \LP approximation subproblem as the next iterate for the computation of the new attribute qualities. 
The process continues until a fixed point is reached.
The success of the \DR algorithm lies in its ``accurate'' \LP approximation subproblems that provide a critical connection
between quality changes at the inputs nodes and those at the output nodes \citep{Kutz2014,Khor2017}. 
In particular, the \DR algorithm first keeps track of the difference in attribute quality values multiplied by the total amount of flow in each pool (called \emph{quality error}) and distributes the error to the linear approximation of the bilinear terms, corresponding to the output nodes, in proportion to the amount of flow.
Due to the accuracy of the \LP approximation subproblems, the \DR algorithm usually finds a high-quality (or even global) solution in practice \citep{Fieldhouse1993,Kutz2014,Khor2017}. 
The \DR algorithm has become a standard in \LP modeling systems for refinery planning; many refinery companies (e.g., Chevron \citep{Kutz2014}, Aspen PIMS \citep{PIMS}, and Haverly System \citep{Kathy2022}) have applied it to solve real-world pooling problems.

It is well-known that the \DR algorithm is closely related to the classic \SLP algorithm \cite{Griffith1961}, where the \LP subproblem is derived by using the first-order Taylor series expansion technique \citep{Lasdon1990,Kathy2022}. 
In analogy to the \DR algorithm, the classic \SLP algorithm begins with an initial solution, solves the \LP approximation subproblem derived around the current iterate, and takes the optimal solution from the \LP approximation subproblem as the next iterate. 
\citet{Lasdon1979,Baker1985,Zhang1985,Greenberg1995} applied the classic \SLP algorithm to solve the P-formulation of the pooling problem, where the bilinear terms of flow and attribute quality variables are linearized by using the first-order Taylor series expansion technique.
In \citet{Lasdon1990}, the authors showed that if the amounts of flow in all pools are positive at a given solution, then
the \LP approximation subproblem in the \DR algorithm can be transformed from  that in the classic \SLP algorithm (through a change of variables); see also \citet{Kathy2022}.
In  \cref{subsec:DR}, by taking into account the case that the amount of flow in some pool may be zero, we present a (variant of) \DR algorithm whose \LP approximation subproblem is more accurate than the aforementioned \LP approximation subproblems. 
It is worthy mentioning that recognizing the theoretical relation of the \DR algorithm to the \SLP algorithm can further provide  insight into how the \DR algorithm works, thereby improving users' confidence in accepting the \DR algorithm \citep{Lasdon1990,Kathy2022}. 

\subsection{Contributions}

The goal of this paper is to provide a more in-depth theoretical analysis of the \DR algorithm and to develop more effective variants of the \DR algorithm for finding high-quality solutions for the pooling problem.
More specifically, 
\begin{itemize}
	\item 
	By projecting the quality variables out from the P-formulation, we first propose a new \NLP formulation for the pooling problem.
	Compared with the classic P-formulation, the new \NLP formulation involves only the flow variables and thus is more amenable to algorithmic design.
	Then, we develop an \SLP algorithm based on the newly proposed formulation and show that it is equivalent to the well-known \DR algorithm. 
	This enables to provide a new theoretical view on the well-known \DR algorithm, that is, it can be seen as a direct application of the \SLP algorithm to the proposed \NLP formulation. 
	\item We then go one step further to develop a new variant of \DR algorithm, called penalty \DR (\PDR) algorithm, based on the newly proposed formulation.
	The proposed \PDR algorithm is a penalty algorithm where the violations of the (linearized) nonlinear constraints are penalized in the objective function of the \LP approximation problem with the penalty terms increasing when the constraint violations tend to be large.
	Compared with the \LP approximation problem in the classic \DR algorithm, the \LP approximation problem in the proposed  \PDR algorithm can return a solution with a better objective value, which makes the proposed \PDR algorithm more likely  to find high-quality solutions for the pooling problem. 
\end{itemize}
Computational results demonstrate that the newly proposed \PDR is more effective than the classic \SLP and \DR algorithms in terms of finding high-quality feasible solutions for the pooling problem.

The rest of the paper is organized as follows.
\cref{sect:problem-description} presents the  P-formulation of the pooling problem and revisits the \DR algorithm.
\cref{section:3} develops a new \NLP formulation for the pooling problem and new \DR algorithms based on this new formulation.
\cref{sect:compu} reports the computational results.
Finally, \cref{sect:conclusion} draws the conclusion.

\section{Problem formulation and distributed recursion}
\label{sect:problem-description}

In this section, we review the P-formulation and the \DR  algorithm for the pooling problem \cite{Haverly1978}.
We also discuss the connection between the \DR and \SLP algorithms \cite{Griffith1961}.
	
\subsection{Mathematical formulation}

Let $G = (N, A)$ be a simple acyclic directed graph, where $N$ and $A$ represent the sets of nodes and directed arcs, respectively.
The set of nodes $N$ can be partitioned into three disjoint subsets $I$, $L$, and $J$, where $I$ is the set of input nodes, $L$ is the set of pool nodes, and $J$ is the set of output nodes. 
The set of directed arcs $A$ is assumed to be a subset of $(I \times L)\cup (I \times J)\cup (L \times J)$.
In this paper, we concentrate on the standard pooling problem, which does not include arcs between the pool nodes; 
for investigations on generalized pooling problem which allows arcs between pools, we refer to \citet{Audet2004}, \citet{Meyer2006}, \citet{Misener2011}, and \citet{Dai2018} among many of them.

For each $t \in N$, let $u_t$ be the capacity of this node.
Specifically, $u_i$ represents the total available supply of raw materials at input node $i \in I$, and 
$u_{\ell}$ represents the processing capability of pool $\ell \in L$, and $u_j$ represents the maximum product demand at the output node $j \in J$.
The maximum flow that can be carried on arc $(i,j)\in A$ is denoted as  $u_{ij}$.
For each arc $(i,j) \in A$, we denote by $w_{ij}$ the  weight of sending a unit flow from node $i$ to node $j$. 
Usually, we have $w_{it} \leq 0 $ for $(i,t) \in A$ with $i \in I$ and $w_{tj} \geq 0$ for $(t,j) \in A$ with $j \in J$, which reflect the unit costs of purchasing raw materials at the input nodes and the unit revenues from selling products at output nodes \citep{Dey2015}.

Let $K$ be the set of attributes.
The attribute qualities of input nodes are assumed to be known and are denoted by $\lambda_{ik}$ for $i \in I$ and $k \in K$.
For each output node $j \in J$, the lower and upper bound requirements on attribute $k$ are denoted by $\lambda^{\min}_{jk}$ and $\lambda^{\max}_{jk}$, respectively.

Let $y_{ij}$ be the amount of flow on arc $(i,j) \in A$ and  $\alpha_{jk}$ be the quality of attribute $k$ at a pool or an output node $j \in L \cup J$.
Throughout, we follow  \citet{Gupte2017} to write equations using the flow variables $y_{ij}$ with the understanding that $y_{ij}$ is defined only for $(i,j) \in A$.
The pooling problem attempts to route the flow to maximize the total weight (or net profit) while requiring the capacity constraints at the nodes and arcs to be satisfied and the attribute qualities at the output nodes $\alpha_{jk}$ to be within the range $[\lambda^{\min}_{jk}, \lambda^{\max}_{jk}]$ ($j \in J$ and $k \in K$). 
Its mathematical formulation can be written as follows:

\begin{equation}\label{prob:pooling-P}\tag{P}
	\max_{y,\, \alpha} \left\{ \sum_{(i,j) \in A} w_{ij} y_{ij} \, : \, \eqref{cons:flow-conservation}\text{--}\eqref{cons:capacity-arc}\right\},
\end{equation}
where

	\begin{align}
		& \sum_{i \in I} y_{i\ell} = \sum_{j \in J} y_{\ell j}, \ \forall \ \ell \in L, \label{cons:flow-conservation} \\
		& \sum_{i \in I} \lambda_{ik} y_{i\ell}  = \alpha_{\ell k} \sum_{j \in J} y_{\ell j}, \ \forall \ \ell \in L, \ k \in K, \label{cons:material-balance-ell} \\
		& \sum_{i \in I} \lambda_{ik} y_{ij} + \sum_{\ell \in L} \alpha_{\ell k} y_{\ell j}  = \alpha_{jk} \sum_{i \in I \cup L} y_{ij}, \ \forall \ j \in J, \ k \in K, \label{cons:material-balance-j} \\
		& \lambda_{jk}^{\min} \le \alpha_{jk} \le \lambda_{jk}^{\max}, \ \forall \ j \in J, \ k \in K, \label{cons:alpha-lb-ub-j} \\
		& \sum_{j \in L \cup J} y_{ij} \le u_i, \ \forall \ i \in I, \label{cons:capacity-node-i} \\
		& \sum_{j \in J} y_{\ell j} \le u_\ell, \ \forall \ \ell \in L, \label{cons:capacity-node-ell} \\
		& \sum_{i \in I \cup L} y_{ij} \le u_j, \ \forall \ j \in J, \label{cons:capacity-node-j} \\
		& 0 \le y_{ij} \le u_{ij}, \ \forall\ (i, j) \in A.\label{cons:capacity-arc}
	\end{align}

The flow conservation constraints \eqref{cons:flow-conservation} ensure that the total inflow is equal to the total outflow at each pool node.
Constraints \eqref{cons:material-balance-ell}--\eqref{cons:material-balance-j} ensure that for each pool or output node and for each attribute, the attribute quality of the outflows or products is a weighted average of the attribute qualities of the inflows.
Constraints \eqref{cons:alpha-lb-ub-j} require the attribute qualities of the end products at the output nodes to be within the prespecified bounds.
Constraints \eqref{cons:capacity-node-i}, \eqref{cons:capacity-node-ell}, and \eqref{cons:capacity-node-j} model the available raw material supplies, pool capacities, and maximum product demands, respectively. 
Finally, constraints \eqref{cons:capacity-arc} enforce the bounds for the flow variables.

Observe that constraints \eqref{cons:material-balance-j} and \eqref{cons:alpha-lb-ub-j} can be combined and substituted by the following linear constraints

	\begin{align}
		& \sum_{i \in I} \lambda_{ik} y_{ij}
		+ \sum_{\ell \in L} \alpha_{\ell k} y_{\ell j} \ge 
		\lambda_{jk}^{\min} \sum_{i \in I \cup L} y_{ij}, \ \forall \ j \in J, \ k \in K,
		\label{cons:material-balance-j-lb}\\
		& \sum_{i \in I} \lambda_{ik} y_{ij}
		+ \sum_{\ell \in L} \alpha_{\ell k} y_{\ell j} \le
		\lambda_{jk}^{\max} \sum_{i \in I \cup L} y_{ij}, \ \forall \ j \in J, \ k \in K.
		\label{cons:material-balance-j-ub}
	\end{align}

In addition, the quality variables $\alpha_{jk}$ for $j \in J$ and $k \in K$ can be eliminated from the problem formulation.
In the following, unless otherwise specified, we will always apply the above transformation to  formulation \eqref{prob:pooling-P}.

The nonconvex \BLP formulation \eqref{prob:pooling-P} was first proposed by \citet{Haverly1978} and often referred as \emph{P-formulation} in the literature
\citep{Tawarmalani2002,Alfaki2013,Gupte2017}.

In addition to the P-formulation \eqref{prob:pooling-P} for the pooling problem, other formulations have also been proposed in the literature, which include  the Q-formulation \citep{BenTal1994},  {PQ}-formulation \citep{Sahinidis2005},  hybrid formulation \citep{Audet2004}, {STP}-formulation \citep{Alfaki2013}, and QQ-formulation \citep{Grothey2023}. 
It is worthwhile remarking that the P-formulation \eqref{prob:pooling-P} has been widely used in refinery companies such as Chevron \citep{Kutz2014}, Aspen PIMS \citep{PIMS}, and Haverly System \citep{Kathy2022}.
This wide applicability could be attributed to the success of the \DR algorithm \citep{Haverly1978,Kathy2022}, as will be detailed in the next subsection.

\subsection{Distributed recursion algorithm}
\label{subsec:DR}

We begin with the \SLP algorithm,  which was proposed by \citet{Griffith1961} and has been frequently used to tackle the pooling problem in commercial applications \citep{Haugland2010}.
Consider the following \NLP problem:
\begin{equation}\label{prob:nlp}\tag{NLP}
	\max_{x} \left\{ f(x): g(x) \le 0\right\},
\end{equation}
where $f(x): \mathbb{R}^n \rightarrow \mathbb{R}$ and $g(x): \mathbb{R}^n \rightarrow \mathbb{R}^m$ are continuously differentiable.
The idea behind the \SLP algorithm is to first approximate \eqref{prob:nlp} at the current iterate $x^t$ by an \LP problem and then use the maximizer of the approximation problem to define a new iterate $x^{t+1}$. 
Specifically, given $x^t \in \mathbb{R}^n$, \SLP solves the following \LP problem (derived by using the first-order Taylor series expansion technique):
\begin{equation}\label{prob:nlp-Delta}\tag{$\LP(x^t)$}
	\max_{x} \left\{ f(x^{t}) + \nabla f(x^{t})^\top (x - x^t)  : g(x^{t}) + \nabla g(x^{t})^\top (x - x^t) \le 0 \right\},
\end{equation}
obtaining an optimal solution $x^{t+1}$ treated as a new iterate for the next iteration.
This procedure continues until the convergence to a fixed point is achieved, i.e., $x^{t+1} = x^{t}$. 

In order to apply the \SLP algorithm to the pooling problem \eqref{prob:pooling-P}, we consider the first-order Taylor series expansion of $\alpha_{\ell k} y_{\ell j}$ at point $(\alpha^t, y^t)$:
\begin{equation}\label{tmp-approx}
	\alpha_{\ell k} y_{\ell j} \approx \alpha_{\ell k}^t y_{\ell j}^t + \alpha_{\ell k}^t (y_{\ell j} - y_{\ell j}^t)  + y_{\ell j}^t (\alpha_{\ell k} - \alpha_{\ell k}^t)
	= \alpha_{\ell k}^t y_{\ell j} + y_{\ell j}^t (\alpha_{\ell k} - \alpha_{\ell k}^t), 
\end{equation}
and derive the \LP approximation of \eqref{prob:pooling-P} around point $(\alpha^t, y^t)$: 
\begin{equation}
	\tag{$\text{SLP}(\alpha^t,y^t)$}\label{prob:pooling-P-SLP} 
	\max_{y, \, \alpha} \left\{ \sum_{(i,j) \in A} w_{ij} y_{ij} : 
	\eqref{cons:flow-conservation}, ~
	\eqref{cons:capacity-node-i}\text{--}
	\eqref{cons:capacity-arc}, ~
	\eqref{SLP:material-balance-ell}\text{--}
	\eqref{SLP:material-balance-j-ub}
	\right\},
\end{equation} 
where 
\begin{align}
	&\sum_{i \in I} \lambda_{ik} y_{i \ell} = \alpha_{\ell k}^t \sum_{j \in J} y_{\ell j} + (\alpha_{\ell k} - \alpha_{\ell k}^t) \sum_{j \in J} y_{\ell j}^t, \ \forall \ \ell \in L, \ k \in K,
	\label{SLP:material-balance-ell}\\
	& \sum_{i \in I} \lambda_{ik} y_{ij}
	+ \sum_{\ell \in L}(\alpha_{\ell k}^t y_{\ell j} + y_{\ell j}^t (\alpha_{\ell k} - \alpha_{\ell k}^t)) \ge 
	\lambda_{jk}^{\min} \sum_{i \in I \cup L} y_{ij}, \ \forall \ j \in J, \ k \in K,
	\label{SLP:material-balance-j-lb}\\
	& \sum_{i \in I} \lambda_{ik} y_{ij}
	+ \sum_{\ell \in L} (\alpha_{\ell k}^t y_{\ell j} + y_{\ell j}^t (\alpha_{\ell k} - \alpha_{\ell k}^t)) \le
	\lambda_{jk}^{\max} \sum_{i \in I \cup L} y_{ij}, \ \forall \ j \in J, \ k \in K.
	\label{SLP:material-balance-j-ub}
\end{align}
The algorithmic details of the \SLP algorithm based on formulation \eqref{prob:pooling-P} are summarized in \cref{alg:SLP0}.

\begin{algorithm}
	\caption{The successive linear programming algorithm based on formulation \eqref{prob:pooling-P}}
	\KwIn{Choose an initial solution $(\alpha^0, y^0)$ and a maximum number of iterations $t_{\max}$.}
	\label{alg:SLP0}
	Set $t \leftarrow 0$\;
	\While{$t \leq t_{\max}$}{
		Solve \eqref{prob:pooling-P-SLP} to obtain a new iterate $(\alpha^{t+1}, y^{t+1})$\;
		\If{$(\alpha^{t+1}, y^{t+1}) = (\alpha^t, y^t)$}{
			\textbf{Stop} with a feasible solution $(\alpha^{t+1}, y^{t+1})$ of formulation \eqref{prob:pooling-P}\;
		}
		Set $t \leftarrow t+1$\;
	}
\end{algorithm}

Unfortunately, the above direct application of  the \SLP algorithm to the pooling problem usually fails to find a high-quality solution as it fails to address the {strong relation} between variables $\alpha$ and $y$ in the original formulation \eqref{prob:pooling-P}.
Indeed, by constraints \eqref{cons:material-balance-ell}, the relations  $\alpha_{\ell k} =\frac{\sum_{i \in I} \lambda_{ik} y_{i\ell}}{\sum_{j \in J} y_{\ell j}}$ for $\ell \in L$ and $k \in K$ between variables $\alpha$ and $y$ must hold (when $\sum_{j \in J} y_{\ell j} > 0$).
However, even if such relations hold at the current point $(\alpha^{t}, y^{t})$, it may be violated by the next {iterate} $(\alpha^{t+1}, y^{t+1})$ (i.e., the optimal solution of \eqref{prob:pooling-P-SLP}).
The \DR algorithm can better address the above weakness of the \SLP algorithm \citep{Haverly1978,Kathy2022}. 
Its basic idea is to project variables $\alpha$ out from the \LP approximation problem \eqref{prob:pooling-P-SLP} and use a more \emph{accurate} solution $\alpha^{t+1}$ in a postprocessing step (so that the relations $\alpha_{\ell k} =\frac{\sum_{i \in I} \lambda_{ik} y_{i\ell}}{\sum_{j \in J} y_{\ell j}}$ for $\ell\in L$ and $k \in K$ hold at the new iterate $(\alpha^{t+1}, y^{t+1})$ when $\sum_{j \in J} y_{\ell j}^{t+1} >0$). 

To project variables $\alpha$ out from the \LP approximation problem \eqref{prob:pooling-P-SLP},  we can use \eqref{SLP:material-balance-ell} and rewrite 
the linearization of $\alpha_{\ell k} y_{\ell j}$ in \eqref{tmp-approx} as
\begin{equation*}
	\begin{aligned}
		\alpha_{\ell k} y_{\ell j} 
		& \approx  \alpha_{\ell k}^t y_{\ell j} + y_{\ell j}^t (\alpha_{\ell k} - \alpha_{\ell k}^t)\\
		& \overset{\text{(a)}}{=}\alpha_{\ell j}^t y_{\ell j} + \frac{y_{\ell j}^t}{\sum_{r \in J} y_{\ell r}^t}   \left((\alpha_{\ell k} - \alpha_{\ell k}^t) \sum_{r \in J} y_{\ell r}^t \right)\\
		& = \alpha_{\ell k}^t y_{\ell j} + \frac{y_{\ell j}^t}{\sum_{r \in J} y_{\ell r}^t}   \left(\sum_{i \in I} \lambda_{ik} y_{i \ell} - \alpha_{\ell k}^t \sum_{r \in J} y_{\ell r}\right),
	\end{aligned}
\end{equation*}
Observe that (a) holds only when $\sum_{r \in J} y_{\ell r}^t > 0$. 
For the case $\sum_{r \in J} y_{\ell r}^t =0$, we have $y^t_{\ell j} = 0$ (as $j \in J$ and $y_{\ell r}^t \geq 0$ for all $r \in J$), and by \eqref{tmp-approx}, we can use the  approximation $\alpha_{\ell k} y_{\ell j} \approx \alpha_{\ell k}^t y_{\ell j}$ instead.
Combining the two cases, we obtain
\begin{equation}\label{tmp-approx2}
	\alpha_{\ell k} y_{\ell j} \approx \sigma_{\ell j k}(y): = 
	\left\{
	\begin{aligned}
		& \alpha_{\ell k}^t y_{\ell j} + \frac{y_{\ell j}^t}{\sum_{r \in J} y_{\ell r}^t}   \left(\sum_{i \in I} \lambda_{ik} y_{i \ell} - \alpha_{\ell k}^t \sum_{r \in J} y_{\ell r}\right), 
		&& \text{if} ~ \sum_{r \in J} y_{\ell r}^t > 0; \\
		& \alpha_{\ell k}^t y_{\ell j}, 
		&& \text{otherwise}.
	\end{aligned}
	\right.
\end{equation}
Observe that the linear term $\sigma_{\ell j k}(y)$ in \eqref{tmp-approx2} depends on the current iterate $(\alpha^t, y^t)$ but we omit this dependence for notations convenience.
By substituting $\alpha_{\ell k} y_{\ell j}$ with the linear terms  $\sigma_{\ell j k}(y)$ into constraints \eqref{cons:material-balance-j-lb} and \eqref{cons:material-balance-j-ub}, we obtain 
	\begin{align}
		& \sum_{i \in I} \lambda_{ik} y_{ij}
		+ \sum_{\ell \in L} \sigma_{\ell j k}(y) \ge 
		\lambda_{jk}^{\min} \sum_{i \in I \cup L} y_{ij}, \ \forall \ j \in J, \ k \in K,
		\label{DR:material-balance-j-lb}\\
		& \sum_{i \in I} \lambda_{ik} y_{ij}
		+ \sum_{\ell \in L} \sigma_{\ell j k}(y) \le
		\lambda_{jk}^{\max} \sum_{i \in I \cup L} y_{ij}, \ \forall \ j \in J, \ k \in K,
		\label{DR:material-balance-j-ub}
	\end{align}
and a new \LP approximation of problem \eqref{prob:pooling-P} at point $(\alpha^t, y^t)$: 
\begin{equation}
	\tag{$\text{DR}(\alpha^t,y^t)$}\label{prob:pooling-P-DR} 
	\max_{y} \left\{ \sum_{(i,j) \in A} c_{ij} y_{ij} : 
	\eqref{cons:flow-conservation}, ~
\eqref{cons:capacity-node-i}\text{--}
\eqref{cons:capacity-arc}, ~
\eqref{DR:material-balance-j-lb},~ \eqref{DR:material-balance-j-ub}
	\right\}.
\end{equation}

Note that in the new \LP approximation problem \eqref{prob:pooling-P-DR}, we do not need the $\alpha $ variables. 
\citet{Lasdon1990} showed the equivalence between problems \eqref{prob:pooling-P-DR} and \eqref{prob:pooling-P-SLP} when  $\sum_{j \in J} y_{\ell j}^{t} >0$ holds for all $\ell\in L$.
However, when $\sum_{j \in J} y_{\ell j}^{t} =0$ holds for some $\ell\in L$, the two problems may not be equivalent.
Indeed, for $\ell \in L$ with $\sum_{j \in J} y_{\ell j}^t = 0$, constraint \eqref{SLP:material-balance-ell} in problem \eqref{prob:pooling-P-SLP} reduces to $\sum_{i \in I} \lambda_{ik} y_{i \ell} = \alpha_{\ell k}^t \sum_{j \in J} y_{\ell j} $; and 
different from \eqref{prob:pooling-P-SLP}, problem \eqref{prob:pooling-P-DR} does not include such constraints.
It is worthwhile remarking that  these constraints enforce either the amount of flow at pool is $0$ or the qualities of attributes $k \in K$ at pool $\ell$ are fixed to $ \alpha_{\ell k}^t$, which, however, are unnecessary requirements on the flow variables and thus may lead to an inaccurate \LP approximation problem. 
As an example, if for some $k \in K$, $\alpha_{\ell k}^t < \lambda_{ik}$ holds for all $i \in I$, then the constraint $\sum_{i \in I} \lambda_{ik} y_{i \ell} = \alpha_{\ell k}^t \sum_{j \in J} y_{\ell j} $ in problem \eqref{prob:pooling-P-SLP} enforces $y_{i\ell} =0$ for all $i \in I$ and $y_{\ell j}=0$ for all $j \in J$ and thus blocks the opportunity to use pool $\ell$ \citep{Greenberg1995}.
Due to this, we decide to not include these unnecessary constraints into the \LP approximation problem \eqref{prob:pooling-P-DR}.

Solving problem \eqref{prob:pooling-P-DR} yields a solution $y^{t+1}$, which can be used to compute the quality values $\alpha_{\ell k}^{t+1}= q_{\ell k}(y^{t+1})$ for the next iteration, where $\{q_{\ell k}(y)\}$ are defined by 
\begin{equation}\label{def:quality}
	q_{\ell k}(y):= \left\{
	\begin{aligned}
		& \frac{\sum_{i \in I} \lambda_{ik} y_{i\ell}}{\sum_{j \in J} y_{\ell j}},
		&& \text{if} ~ \sum_{j \in J} y_{\ell j} > 0;\\
		&0 , && \text{otherwise},
	\end{aligned}
	\right. \quad \ \forall \ \ell \in L, \ k \in K.
\end{equation}
Note that this enforces the strong relations  $\alpha_{\ell k} =\frac{\sum_{i \in I} \lambda_{ik} y_{i\ell}}{\sum_{j \in J} y_{\ell j}}$ for $\ell \in L$ and $k \in K$ between variables $\alpha$ and $y$ (when $\sum_{j \in J} y_{\ell j} > 0$).
Also note that to ensure such relations, the second case in \eqref{def:quality} can take an arbitrary value but we decide to set it to zero for simplicity of discussion.
Also note that if $\alpha^t = q(y^t)$, then $\alpha^t_{\ell k}=0$ holds for all $\ell \in L$ with $ \sum_{j \in J} y^t_{\ell j} = 0$ and $k \in K$, and thus \eqref{tmp-approx2} reduces to
\begin{equation}\label{sigmaapprox}
	\sigma_{\ell j k}(y) = 
	\left\{
	\begin{aligned}
		& \alpha_{\ell k}^t y_{\ell j} + \frac{y_{\ell j}^t}{\sum_{r \in J} y_{\ell r}^t}   \left(\sum_{i \in I} \lambda_{ik} y_{i \ell} - \alpha_{\ell k}^t \sum_{r \in J} y_{\ell r}\right), 
		&& \text{if} ~ \sum_{r \in J} y_{\ell r}^t > 0; \\
		& 0, 
		&& \text{otherwise}.
	\end{aligned}
	\right.
\end{equation} 

The overall algorithmic details of the \DR algorithm are summarized in  \cref{alg:DR-pooling-P}.

\begin{algorithm}[htbp]
	\caption{The distributed recursion algorithm}
	\label{alg:DR-pooling-P}
	\KwIn{Choose an initial solution $y^0$ and a maximum number of iterations $t_{\max}$.}
	Set $t \leftarrow 0$\;
	\While{$t \leq  t_{\max}$}{
		For each $\ell \in L$ and $k \in K$, compute $\alpha^{t}_{\ell k} = q_{\ell k} (y^t)$, where $q_{\ell k}(y)$ is defined in \eqref{def:quality}\;
		Solve \eqref{prob:pooling-P-DR} to obtain a new iterate $y^{t+1}$\;
		\If{$y^{t+1} = y^t$}{
			\textbf{Stop} with a feasible solution $(\alpha^{t+1}, y^{t+1})$ of formulation \eqref{prob:pooling-P}\;
		}
		Set $t \leftarrow t+1$\;
	}
\end{algorithm}

Two remarks on the \DR algorithm are in order.

First, an initial solution of $y^0$ in \cref{alg:DR-pooling-P} can be constructed via solving the linear multicommodity network flow problem
\begin{equation}\label{mcf}
	\max_{y} \left\{ \sum_{(i,\,j) \in A} w_{ij} y_{ij} : 
	\eqref{cons:flow-conservation}, ~
	\eqref{cons:capacity-node-i}\text{--}
	\eqref{cons:capacity-arc}\right\},
\end{equation}
an \LP relaxation of problem \eqref{prob:pooling-P} obtained by dropping quality variables $\alpha$ and all nonlinear quality constraints \eqref{cons:material-balance-ell}, \eqref{cons:material-balance-j-lb}, and \eqref{cons:material-balance-j-ub} from problem \eqref{prob:pooling-P} \citep{Greenberg1995}.

Other sophisticated techniques for the construction of the initial solution can be found in \citet{Audet2004,Haverly1978,Dai2018}.

Second, in order to provide more insights of the \DR algorithm from a practical perspective, let us rewrite {the first case} of the approximation \eqref{tmp-approx2} at point $(\alpha^t, y^t)$ into $\alpha_{\ell k} y_{\ell j} \approx \alpha_{\ell k}^t y_{\ell j} + \beta_{\ell j}^t R_{\ell k}$, where 
\begin{align}\label{def:R}
	& R_{\ell k} = \sum_{i \in I} \lambda_{ik} y_{i \ell} - \alpha_{\ell k}^t \sum_{j \in J} y_{\ell j}, 
	\ \forall  \ k \in K,\\
	& \label{def:beta}
	\beta_{\ell j}^t = \frac{y_{\ell j}^t}{\sum_{r \in J} y_{\ell r}^t},~~ \ \forall  \ j \in J.
\end{align}
The error term $R_{\ell k}$ in \eqref{def:R} characterizes the difference in quality value times the total amount of flow in pool $\ell$ while 
the distribution factor $\beta^t_{\ell j}$ represents the proportion to the amount of flow in pool $\ell$ that terminates at output node $j$.
Observe that $\sum_{j \in J} \beta_{\ell j}^t=1$.

Thus, the approximations $\alpha_{\ell k} y_{\ell j} \approx \alpha_{\ell k}^t y_{\ell j} + \beta_{\ell j}^t R_{\ell k}$ enforce that the error term $R_{\ell k}$ is distributed to each output node in proportion to the amount of flow. 

\section{Reformulation and new successive linear programming algorithms}\label{section:3}

The \DR algorithm is indeed an \SLP-type algorithm where in each iteration, it first solves the ``projected'' \LP problem \eqref{prob:pooling-P-DR}, obtained by projecting variables $\alpha_{\ell k}$ out from the \LP approximation subproblem of the original problem \eqref{prob:pooling-P} (when $\sum_{r \in J} y_{\ell r}^t > 0$) and removing constraints \eqref{SLP:material-balance-ell} (when $\sum_{r \in J} y_{\ell r}^t =0$), to compute the flow values $y^{t+1}$, and then uses \eqref{def:quality} to obtain the quality values $\alpha^{t+1}$.
In this section, we go for a different direction by directly projecting the quality variables $\alpha$ out from the \NLP formulation \eqref{prob:pooling-P}, obtaining a new \NLP formulation that includes only the $y$ variables. 
Subsequently, we propose two \SLP-type algorithms based on this new proposed  \NLP formulation.

\subsection{Flow formulation}\label{subsect:flow-formulation}

Formulation \eqref{prob:pooling-P} involves the $y$ and $\alpha$ variables, which represent the flows on the arcs and the qualities of the pool components, respectively.
However, as discussed in \cref{subsec:DR}, there exist strong relations between the $\alpha$ and $y$ variables in formulation \eqref{prob:pooling-P}.

Indeed, letting $(\alpha, y)$ be a feasible solution of formulation \eqref{prob:pooling-P}, for a pool $\ell \in L$, if the total outflow is nonzero (i.e., $\sum_{j \in L} y _{\ell j} >0$), then constraint \eqref{cons:material-balance-ell} implies $\alpha_{\ell k} = \frac{\sum_{i \in I} \lambda_{ik} y_{i\ell}}{\sum_{j \in J}y_{\ell j}}$;
otherwise, by constraints \eqref{cons:flow-conservation} and \eqref{cons:capacity-arc}, $\sum_{i \in I} y _{i\ell} =\sum_{j \in L} y _{\ell j} =0$ must hold and thus no mixing occurs at pool $\ell$.
Thus, constraint \eqref{cons:material-balance-ell} reduces to the trivial equality $0=0$ and  $\alpha_{\ell k}$ can take an arbitrary value.
For simplicity of discussion, if $\sum_{i \in I} y _{i\ell} =\sum_{j \in L} y _{\ell j} =0$ holds for some $\ell \in L$, we assume that $\alpha_{\ell k} = 0$ holds for all $k \in K$ in the following.
Combining the two cases, we can set $\alpha_{\ell k} = q_{\ell k}(y)$ for all $\ell \in L $ and $k \in K$ in formulation \eqref{prob:pooling-P}, where $q_{\ell k} (y)$ is defined in \eqref{def:quality}, and obtain the following equivalent \NLP formulation for the pooling problem:

\begin{equation}\label{prob:pooling-F}
\tag{F}
\max_{y} \left\{ \sum_{(i,j) \in A} w_{ij} y_{ij} :
	\eqref{cons:flow-conservation}, ~
\eqref{cons:capacity-node-i}\text{--}
\eqref{cons:capacity-arc}, ~
\eqref{cons:material-balance-j-lb-Fy},  ~
\eqref{cons:material-balance-j-ub-Fy}  ~
\right\},
\end{equation}
where

	\begin{align}
		& \sum_{i \in I} \lambda_{ik} y_{ij}
		+ \sum_{\ell \in L} q_{\ell k}(y) y_{\ell j} \ge 
		\lambda_{jk}^{\min} \sum_{i \in I \cup L} y_{ij}, \ \forall \ j \in J, \ k \in K,
		\label{cons:material-balance-j-lb-Fy}\\
		& \sum_{i \in I} \lambda_{ik} y_{ij}
		+ \sum_{\ell \in L} q_{\ell k}(y) y_{\ell j} \le
		\lambda_{jk}^{\max} \sum_{i \in I \cup L} y_{ij}, \ \forall \ j \in J, \ k \in K.
		\label{cons:material-balance-j-ub-Fy}
	\end{align}

Observe that only the flow variables $y$ are involved in formulation \eqref{prob:pooling-F} while the quality values are directly reflected through functions $q_{\ell k}(y)$.
Thus, compared with  formulation \eqref{prob:pooling-P}, formulation \eqref{prob:pooling-F} avoids maintaining the relation between the quality variables $\alpha$ and flow variables $y$, thereby  significantly facilitating the algorithmic design.

On the other hand, unlike formulation \eqref{prob:pooling-P} which is a smooth optimization problem, formulation \eqref{prob:pooling-F} is a nonsmooth optimization problem  as for a given point $y^t$, function $q_{\ell k}(y)$ (or $q_{\ell k}(y)y_{\ell j}$) is indifferentiable when $\sum_{r \in J} y_{\ell r}^t = 0$.
However, this is not a big issue when designing an \SLP-type algorithm since it only requires to find a ``good'' linear approximation for the terms  $\{q_{\ell k}(y)y_{\ell j}\}$ at point $y^t$.
In the next two subsections, we will develop two \SLP-type algorithms based on formulation \eqref{prob:pooling-F}.

\subsection{Successive linear programming algorithm based formulation \eqref{prob:pooling-F}} \label{subsect:flow-SLP}

In this subsection, we adapt the \SLP framework to formulation \eqref{prob:pooling-F} and develop a new \SLP algorithm.
We also analyze the relation of the newly proposed \SLP algorithm to the \DR algorithm.

\subsubsection{Proposed algorithm}

In order to design an \SLP algorithm based on formulation \eqref{prob:pooling-F}, let us first consider the linear approximation of $q_{\ell k}(y) y_{\ell j}$ at point $y^t$.
If $\sum_{r \in J} y_{\ell r}^t > 0$, then $q_{\ell k}(y)$ is continuously differentiable at point $y^t$ and thus we can linearize  $q_{\ell k}(y) y_{\ell j}$ using its first-order Taylor series expansion at point $y^t$:
\begin{equation*}
	q_{\ell k}(y) y_{\ell j}
	\approx 
	q_{\ell k}(y^t) y_{\ell j}^t + \sum_{(i,r) \in A} \frac{\partial (q_{\ell k}(y^t) y_{\ell j}^t)}{\partial y_{ir}} (y_{ir} - y_{ir}^t).
\end{equation*}
Otherwise, $q_{\ell k} (y)$ is indifferentiable at point $y^t$, and we decide to approximate $q_{\ell k}(y) y_{\ell j}$ as $q_{\ell k}(y) y_{\ell j} \approx q_{\ell k}(y^t) y_{\ell j}= 0 \times y_{\ell j} = 0$.
Combining the two cases, we obtain 
\begin{equation}\label{tmp-approx3}
	q_{\ell k}(y) y_{\ell j} \approx \tau_{\ell j k}(y): = 
	\left\{
	\begin{aligned}
		& q_{\ell k}(y^t) y_{\ell j}^t + \sum_{(s,r) \in A} \frac{\partial (q_{\ell k}(y^t) y_{\ell j}^t)}{\partial y_{sr}} (y_{sr} - y_{sr}^t), 
		&& \text{if} ~ \sum_{r \in J} y_{\ell r}^t > 0; \\
		& 0, 
		&& \text{otherwise}.
	\end{aligned}
	\right.
\end{equation}
Note that the linear term $\tau_{\ell j k}(y)$ in \eqref{tmp-approx3} depends on the current iterate $y^t$ but we omit this dependence for notations convenience.

By substituting $q_{\ell k}(y) y_{\ell j}$ with the linear terms $\tau_{\ell j k}(y)$ into constraints \eqref{cons:material-balance-j-lb-Fy} and \eqref{cons:material-balance-j-ub-Fy}, we obtain 

	\begin{align}
		& \sum_{i \in I} \lambda_{ik} y_{ij}
		+ \sum_{\ell \in L} \tau_{\ell j k}(y) \ge 
		\lambda_{jk}^{\min} \sum_{i \in I \cup L} y_{ij}, \ \forall \ j \in J, \ k \in K,
		\label{ySLP:material-balance-j-lb}\\
		& \sum_{i \in I} \lambda_{ik} y_{ij}
		+ \sum_{\ell \in L} \tau_{\ell j k}(y) \le
		\lambda_{jk}^{\max} \sum_{i \in I \cup L} y_{ij}, \ \forall \ j \in J, \ k \in K,
		\label{ySLP:material-balance-j-ub}
	\end{align}

and a new \LP approximation of problem \eqref{prob:pooling-F} at point $y^t$:
\begin{equation}
	\label{prob:pooling-F-SLP}
	\tag{$\text{SLP-F}(y^t)$}
	\max_{y} 
	\left\{
	\sum_{(i,j) \in A} w_{ij} y_{ij} : 
		\eqref{cons:flow-conservation}, ~
	\eqref{cons:capacity-node-i}\text{--}
	\eqref{cons:capacity-arc}, ~
	\eqref{ySLP:material-balance-j-lb}, ~
	\eqref{ySLP:material-balance-j-ub}
	\right\}.
\end{equation}
This enables to develop a new \SLP algorithm based on formulation \eqref{prob:pooling-F}; see \cref{alg:SLP-y}.
\begin{algorithm}
	\caption{The successive linear programming algorithm based on formulation \eqref{prob:pooling-F}}
	\KwIn{Choose an initial solution $y^0$ and a maximum number of iterations $t_{\max}$.}
	\label{alg:SLP-y}
	Set $t \leftarrow 0$\;
	\While{$t \le t_{\max}$}{
		Solve \eqref{prob:pooling-F-SLP} to obtain a new iterate $y^{t+1}$\;
		\If{$y^{t+1} = y^t$}{
			\textbf{Stop} with a feasible solution $y^t$ of formulation \eqref{prob:pooling-F}\;
		}
		Set $t \leftarrow t+1$\;
	}
\end{algorithm}

\subsubsection{Relation to the \DR algorithm}

In order to analyze the relation between the \DR algorithm and the proposed \SLP algorithm based on formulation \eqref{prob:pooling-F}, let us first discuss the relation of the \LP subproblems in the two algorithms, i.e., problems \eqref{prob:pooling-P-DR} and  \eqref{prob:pooling-F-SLP}.
Recall that problem \eqref{prob:pooling-P-DR} is developed by first linearizing the \NLP problem \eqref{prob:pooling-P} at point $(\alpha^t, y^t)$, and then projecting variables $\alpha$ out from the \LP approximation problem and refining the resultant problem (i.e., removing some unnecessary constraints in \eqref{SLP:material-balance-ell}).
Problem \eqref{prob:pooling-F-SLP}, however, is developed in a reverse manner. 
It can be obtained by first projecting the $\alpha$ variables out from the \NLP formulation \eqref{prob:pooling-P} and then linearizing the resultant \NLP formulation using the first-order Taylor series expansion technique.
The development of the two \LP approximation problems \eqref{prob:pooling-P-DR} and  \eqref{prob:pooling-F-SLP} is intuitively illustrated in \cref{fig:relationship-nlp-lp}. 
The following result shows, somewhat surprising, that the two \LP  approximation problems \eqref{prob:pooling-P-DR} and  \eqref{prob:pooling-F-SLP} are equivalent (under the trivial assumption that the values $\alpha^t$ are computed using the formula \eqref{def:quality} with $y = y^t$), although they are derived in different ways and they take different forms.

\begin{figure}\centering
	\begin{tikzpicture} [>=stealth,thick,shorten >=0pt,auto,node distance=3.5cm]
		\tikzstyle{every node}=[scale=0.9]
		\tikzstyle{every text node part} = [font=\footnotesize, scale=.9]
		\tikzstyle{arrow}=[->,>=stealth, thick]
		\tikzstyle{arrow2}=[<->,>=stealth, thick]
		\tikzstyle{rectround} = [align=center, rounded corners,inner sep = 2pt,draw,rectangle,line width = 1pt,fill=blue!20, minimum width = 3.5cm, minimum height = 1cm]
		\tikzstyle{rectdash} = [rounded corners,minimum height=5.5cm,draw,rectangle,line width = 1pt,fill = white,opacity=0.3,minimum width = 3cm, style = dashed]
		\tikzstyle{juxing3} = [rounded corners,minimum height = 1.9cm,draw,rectangle,line width = 1pt,fill = white,minimum width = 3.9cm, style = dotted]
		
		\node (probSLP) [rectround] {\LP problem \eqref{prob:pooling-P-SLP}\\ 
			variables: $\alpha$, $y$};
		\node (probDR) [rectround, right=60pt of probSLP] {\LP problem \eqref{prob:pooling-P-DR}\\ 
			variables: $y$};
		
		\node (probF) [rectround, below=30pt of probSLP] {\NLP problem \eqref{prob:pooling-F}\\
			variables: $y$};
		\node (probFSLP) [rectround, below=30pt of probDR] {\LP problem \eqref{prob:pooling-F-SLP}\\
			variables: $y$};
		
		\node (probP) [rectround, left=60pt of $(probSLP)!.5!(probF)$] {\NLP problem \eqref{prob:pooling-P}\\ 
			variables: $\alpha$, $y$};
		
		\path (probP.north)  edge  [bend left=5, arrow]  node [sloped] {Linearization} (probSLP.west);
		\path (probP.south) edge  [bend right=5, arrow]  node [sloped,below] {Projection} (probF.west);
		\path (probSLP.east) edge [arrow] node [sloped, above] {Projection} node [sloped, below] {Refinement} (probDR.west);
		\path (probF.east) edge [arrow] node [sloped, above] {Linearization} (probFSLP.west);
		\path (probDR.east) edge [bend left=45] node [right] {Equivalent} (probFSLP.east);
	\end{tikzpicture}
	\captionsetup{width=\linewidth}
	\caption{Relations of the \LP approximation problems \eqref{prob:pooling-P-DR} and  \eqref{prob:pooling-F-SLP}.}
	\label{fig:relationship-nlp-lp}
\end{figure}

\begin{theorem}\label{thm:relation-DR-SLPy}
	Given $y^t \in \mathbb{R}_+^{|A|}$, let $\alpha^t$ be defined as: 
	\begin{equation}\label{eq:alphaq}
		\alpha^t_{\ell k} = q_{\ell k}(y^t), ~\forall~ \ell \in L, ~k \in K,
	\end{equation}
	where $\{q_{\ell k}(y)\}$ are defined by \eqref{def:quality}.
	Then the linearization of $\alpha_{\ell k} y_{\ell j}$ at point $(\alpha^t, y^t)$ in problem \eqref{prob:pooling-P-DR} is equivalent to that of $q_{\ell k}(y) y_{\ell j}$ at point $y^t$ in problem \eqref{prob:pooling-F-SLP}; that is,  
	for any given $y \in \mathbb{R}_+^{|A|}$, the following equations hold
	\begin{equation}\label{eq:sigmatau}
		\sigma_{\ell j k}(y) = \tau_{\ell j k}(y),~\forall~\ell \in L, ~ j \in J, ~k \in K,
	\end{equation}
	where $\{\sigma_{\ell j k}(y)\}$ and $\{\tau_{\ell j k}(y)\}$ are defined by 
	\eqref{sigmaapprox}
	and \eqref{tmp-approx3}, respectively.
	Moreover, the two \LP approximation problems  \eqref{prob:pooling-P-DR} and  \eqref{prob:pooling-F-SLP} are equivalent. 
\end{theorem}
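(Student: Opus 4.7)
The plan is to verify the identity $\sigma_{\ell j k}(y)=\tau_{\ell j k}(y)$ pointwise for each $\ell \in L$, $j \in J$, $k \in K$, splitting into the two cases that appear in the definitions \eqref{sigmaapprox} and \eqref{tmp-approx3}. In the degenerate case $\sum_{r \in J} y^t_{\ell r}=0$, both quantities are zero by definition, so the identity is immediate. The bulk of the argument is the nondegenerate case, where I would directly expand the first-order Taylor series of $q_{\ell k}(y)\,y_{\ell j}$ at $y^t$ and simplify using the hypothesis $\alpha^t_{\ell k}=q_{\ell k}(y^t)$.

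The key computation is to evaluate the partial derivatives of the rational map $y \mapsto q_{\ell k}(y)\, y_{\ell j} = \frac{y_{\ell j}\sum_{i \in I}\lambda_{ik} y_{i\ell}}{\sum_{r \in J} y_{\ell r}}$ at $y^t$. Only arcs of the forms $(i,\ell)$ with $i \in I$ and $(\ell,r)$ with $r \in J$ contribute, and a standard quotient-rule calculation yields
\begin{equation*}
\frac{\partial (q_{\ell k}(y) y_{\ell j})}{\partial y_{i\ell}}\bigg|_{y^t} = \frac{y^t_{\ell j}\,\lambda_{ik}}{\sum_{r \in J} y^t_{\ell r}}, \qquad
\frac{\partial (q_{\ell k}(y) y_{\ell j})}{\partial y_{\ell r}}\bigg|_{y^t} = \alpha^t_{\ell k}\,\mathbf{1}_{[r=j]} - \frac{\alpha^t_{\ell k}\, y^t_{\ell j}}{\sum_{s \in J} y^t_{\ell s}}.
\end{equation*}
Substituting these into \eqref{tmp-approx3} and collecting terms, I would regroup the sum over $r \in J$ using $\sum_{r\in J}(y_{\ell r}-y^t_{\ell r})$, which puts $\tau_{\ell j k}(y)$ into the form
$\alpha^t_{\ell k}y_{\ell j}+\frac{y^t_{\ell j}}{\sum_{r}y^t_{\ell r}}\bigl[\sum_{i}\lambda_{ik}(y_{i\ell}-y^t_{i\ell})-\alpha^t_{\ell k}\sum_{r}(y_{\ell r}-y^t_{\ell r})\bigr]$. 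The final step, and the one that makes everything collapse, is invoking the definition $\alpha^t_{\ell k}=q_{\ell k}(y^t)$, which gives the crucial identity $\sum_{i \in I}\lambda_{ik} y^t_{i\ell}=\alpha^t_{\ell k}\sum_{r \in J} y^t_{\ell r}$. The constant terms then cancel and the expression reduces precisely to $\sigma_{\ell j k}(y)$ as defined in \eqref{sigmaapprox}.

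Having established \eqref{eq:sigmatau}, the equivalence of problems \eqref{prob:pooling-P-DR} and \eqref{prob:pooling-F-SLP} follows by inspection: both problems share the same objective, the same variables $y$, and the same flow/capacity constraints \eqref{cons:flow-conservation} and \eqref{cons:capacity-node-i}--\eqref{cons:capacity-arc}; the only remaining constraints are \eqref{DR:material-balance-j-lb}--\eqref{DR:material-balance-j-ub} in one and \eqref{ySLP:material-balance-j-lb}--\eqref{ySLP:material-balance-j-ub} in the other, and these are identical under the pointwise equality of $\sigma_{\ell j k}$ and $\tau_{\ell j k}$ just proved.

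The main obstacle I anticipate is purely mechanical: correctly bookkeeping which arcs $(s,r)\in A$ yield nonzero partial derivatives of $q_{\ell k}(y)\,y_{\ell j}$ and carrying out the quotient-rule expansion without sign or indexing errors, especially separating the diagonal contribution at $(s,r)=(\ell,j)$ from the off-diagonal contributions $(s,r)=(\ell,r)$ with $r\neq j$. Once the derivatives are correctly tabulated, the algebraic collapse enabled by $\alpha^t=q(y^t)$ is short and the rest of the theorem follows by direct comparison of the two LP formulations.
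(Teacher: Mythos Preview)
Your proposal is correct and follows essentially the same approach as the paper: a case split on $\sum_{r\in J}y^t_{\ell r}$, with the degenerate case trivial and the nondegenerate case handled by a direct Taylor/quotient-rule expansion of $q_{\ell k}(y)y_{\ell j}$ at $y^t$, followed by the cancellation coming from the identity $\sum_{i}\lambda_{ik}y^t_{i\ell}=\alpha^t_{\ell k}\sum_{r}y^t_{\ell r}$. The only cosmetic difference is that the paper first applies the product rule to separate $\nabla q_{\ell k}$ from the factor $y_{\ell j}$ before computing partials, whereas you differentiate the product directly; the resulting algebra and conclusion are identical.
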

\begin{proof}
	If $\sum_{r \in J} y_{\ell r}^t = 0$, then $\sigma_{\ell j k} (y) = 0 = \tau_{\ell j k}(y)$ follows directly from the definitions in  \eqref{sigmaapprox} and \eqref{tmp-approx3}.
	Otherwise, $\sum_{r \in J} y_{\ell r}^t > 0$ holds.
	From the definition of $\tau_{\ell j k}(y)$ in \eqref{tmp-approx3}, it follows that
	\begin{equation}\label{tmpeq0}
		\begin{aligned}
			\tau_{\ell j k}(y) 
			& = q_{\ell k}(y^t) y^t_{\ell j} + \sum_{(s,r) \in A} \frac{\partial (q_{\ell k}(y^t) y_{\ell j}^t)}{\partial y_{sr}} (y_{sr} - y_{sr}^t)\\
			& = q_{\ell k}(y^t) y^t_{\ell j} + \sum_{(s,r) \in A} \left(\frac{\partial q_{\ell k}(y^t)}{\partial y_{sr}} y_{\ell j}^t + 
			\frac{\partial y_{\ell j}^t}{\partial y_{sr}} q_{\ell k}(y^t)\right)(y_{sr} - y_{sr}^t)\\
			&  \overset{\text{(a)}}{=} q_{\ell k}(y^t) y^t_{\ell j} + \sum_{(s,r) \in A} \frac{\partial q_{\ell k}(y^t)}{\partial y_{sr}} y_{\ell j}^t (y_{sr} - y_{sr}^t)+ 
			q_{\ell k}(y^t) (y_{\ell j} - y_{\ell j}^t)\\
			& = 	q_{\ell k}(y^t) y_{\ell j}+ y_{\ell j}^t\sum_{(s,r) \in A} \frac{\partial q_{\ell k}(y^t)}{\partial y_{sr}}  (y_{sr} - y_{sr}^t),
		\end{aligned}
	\end{equation}
	where (a) follows from $\frac{\partial y_{\ell j}}{\partial y_{sr}} = 1$ if  $s = \ell$ and $r = j$, and $\frac{\partial y_{\ell j}}{\partial y_{sr}}  =0 $ otherwise.
	By \eqref{def:quality}, we have 
	\begin{equation}\label{tmp-derivative-F}
			\frac{\partial q_{\ell k}(y^t)}{\partial y_{sr}}=
		\left\{ \ 
		\begin{aligned}
			& \frac{\lambda_{sk}}{\sum_{j \in J} y_{\ell j}^t},
			&& \text{if}~s \in I \text{ and } r = \ell; \\
			& - \frac{\sum_{i \in I} \lambda_{i k} y_{i \ell}^t}{\left(\sum_{j \in J} y_{\ell j}^t\right)^2} = - \frac{q_{\ell k}(y^t)}{\sum_{j \in J} y_{\ell j}^t} ,
			&& \text{if}~s = \ell~\text{and}~r \in J; \\
			& 0, 
			&& \text{otherwise},
		\end{aligned}
		\right. 
		 ~~~\forall \ (s,r) \in A.
	\end{equation}
	Thus,
	\begin{equation}\label{tmpeq1}
		\begin{aligned}
			& y_{\ell j}^t \sum_{(s,r) \in A} \frac{\partial q_{\ell k}(y^t)}{\partial y_{sr}} (y_{sr} - y_{sr}^t)\\
			& = y_{\ell j}^t \left(\sum_{(i,\ell) \in I \times \{\ell\} } \frac{\lambda_{ik}}{\sum_{j' \in J} y_{\ell j'}^t} (y_{i\ell} - y_{i\ell}^t) 
			- \sum_{(\ell,r) \in \{\ell\} \times J} \frac{q_{\ell k}(y^t)}{\sum_{j' \in J} y_{\ell j'}^t}  (y_{\ell r} - y_{\ell r}^t)\right)\\
			& = \frac{y_{\ell j}^t}{\sum_{r \in J} y_{\ell r}^t} 			
			\left( 
			\sum_{i \in I} \lambda_{ik} (y_{i \ell} - y_{i \ell}^t)
			- \sum_{r \in J} q_{\ell k}(y^t) (y_{\ell r} - y_{\ell r}^t)
			\right)\\
			&  \overset{\text{(a)}}{=} \frac{y_{\ell j}^t}{\sum_{r \in J} y_{\ell r}^t} 			
			\left( 
			\sum_{i \in I} \lambda_{ik} y_{i \ell} 
			- \sum_{r \in J} q_{\ell k}(y^t) y_{\ell r}
			\right),
		\end{aligned}
	\end{equation}
	where (a) follows from $q_{\ell k} (y^t)=  \frac{\sum_{i \in I} \lambda_{ik} y^t_{i\ell}}{\sum_{j \in J} y^t_{\ell j}}$, or equivalently, $q_{\ell k} (y^t)\sum_{j \in J} y^t_{\ell j}= \sum_{i \in I} \lambda_{ik} y^t_{i\ell}$.
	Combining \eqref{eq:alphaq}, \eqref{tmpeq0}, and  \eqref{tmpeq1}, we obtain 
	\begin{equation}
			\tau_{\ell j k}(y)  = \alpha_{\ell k}^t y_{\ell j} + \frac{y_{\ell j}^t}{\sum_{r \in J} y_{\ell r}^t}   \left(\sum_{i \in I} \lambda_{ik} y_{i \ell} - \alpha_{\ell k}^t \sum_{r \in J} y_{\ell r}\right) = \sigma_{\ell j k}(y).
	\end{equation}
	This completes the proof.
\end{proof}

\cref{thm:relation-DR-SLPy} immediately implies that the newly proposed \SLP algorithm in \cref{alg:SLP-y} and the \DR algorithm in \cref{alg:DR-pooling-P} are also \emph{equivalent}.
Specifically, if we choose the same initial point $y^0$, the two algorithms will converge to same solution for the pooling problem.
Thus, the \DR algorithm can be interpreted  as the \SLP algorithm based on formulation \eqref{prob:pooling-F}.  
This new perspective of the \DR algorithm enables to develop more sophisticated algorithms based on formulation \eqref{prob:pooling-F} to find high-quality solutions for the pooling problem. 
In the next subsection, we will develop a penalty \SLP algorithm based on formulation \eqref{prob:pooling-F}. 

\subsection{Penalty distributed recursion algorithm}\label{subsect:flow-PSLP}

The \SLP algorithm based on formulation \eqref{prob:pooling-F} (or the \DR algorithm) solves an \LP approximation subproblem \eqref{prob:pooling-F-SLP} where the nonlinear constraints in formulation \eqref{prob:pooling-F} are linearized at a point $y^t$. 
Such a linearization \eqref{prob:pooling-F-SLP} may return a new iterate $y^{t+1}$ which is infeasible to the original formulation \eqref{prob:pooling-F} (as the nonlinear constraints \eqref{cons:material-balance-j-lb-Fy} and \eqref{cons:material-balance-j-ub-Fy} may be violated at $y^{t+1}$), 
even if the former iterate $y^t$ is a feasible solution of formulation \eqref{prob:pooling-F}.
The goal of this subsection is to develop an improved  algorithm  which better takes the feasibility of the original formulation \eqref{prob:pooling-F} into consideration during the iteration procedure. 
In particular, we integrate the penalty algorithmic framework \cite{Nocedal1999}
into the \SLP algorithm to solve formulation \eqref{prob:pooling-F}, where the violations of the (linearized) nonlinear constraints \eqref{cons:material-balance-j-lb-Fy} and \eqref{cons:material-balance-j-ub-Fy} are penalized in the objective function of the \LP approximation problem with the penalty terms increasing when the constraint violations tend to be large.
It should be mentioned that the penalty \SLP algorithms based on formulation \eqref{prob:pooling-P} (that involves the $\alpha$ and $y$ variables) have been investigated in the literature; see, e.g., \citet{Zhang1985,Baker1985}.

To develop the penalty \SLP algorithm based on formulation \eqref{prob:pooling-F}, let us first consider the following  penalty problem:  
\begin{equation}\label{prob:pooling-F-penalty}
	\tag{$\text{F}^p$}
	\begin{aligned}
		& \max_{y, \, s} 
		\left\{
		\sum_{(i,j) \in A} w_{ij} y_{ij} - \sum_{j \in J} \sum_{k \in K} (\mu_{jk} s_{jk}^{\min}+ \nu_{jk} s_{jk}^{\max}) :
		\right.\\
		&
		\quad 
		\left. \phantom{\sum_{(i,j) \in A} c_{ij} y_{ij} }
		\eqref{cons:flow-conservation},~
		\eqref{cons:capacity-node-i}\text{--}
		\eqref{cons:capacity-arc}, ~
		\eqref{cons:material-balance-j-lb-Fy-penalty}, ~
		\eqref{cons:material-balance-j-ub-Fy-penalty}, ~
		s_{jk}^{\max}, ~s_{jk}^{\min} \ge 0, \ \forall \ j \in J, \ k \in K
		\right\},
	\end{aligned}
\end{equation}
where
\begin{align}
	& \sum_{i \in I} \lambda_{ik} y_{ij}
	+ \sum_{\ell \in L} q_{\ell k}(y)y_{\ell j}
	+ s_{jk}^{\min}
	\ge 
	\lambda_{jk}^{\min} \sum_{i \in I \cup L} y_{ij} 
	, \ \forall \ j \in J, \ k \in K,
	\label{cons:material-balance-j-lb-Fy-penalty}\\
	& \sum_{i \in I} \lambda_{ik} y_{ij}
	+ \sum_{\ell \in L} q_{\ell k}(y)y_{\ell j}
	- s_{jk}^{\max}
	\le
	\lambda_{jk}^{\max} \sum_{i \in I \cup L} y_{ij}, \ \forall \ j \in J, \ k \in K,
	\label{cons:material-balance-j-ub-Fy-penalty}
\end{align}
the nonnegative variables 
$s_{jk}^{\min}$ and $s_{jk}^{\max}$ are slack variables characterizing the infeasibilities/violations of constraints \eqref{cons:material-balance-j-lb-Fy} and \eqref{cons:material-balance-j-ub-Fy}, respectively, and 
$\mu_{jk}>0$ and $v_{jk}>0$ are the penalty parameters. 
Observe that the penalty problem \eqref{prob:pooling-F-penalty} is equivalent to the penalty version of problem \eqref{prob:pooling-P}:
\begin{equation}\label{prob:pooling-P-penalty}
	\tag{$\text{P}^p$}
	\begin{aligned}
		& \max_{y, \,\alpha,\, s} 
		\left\{
		\sum_{(i,j) \in A} w_{ij} y_{ij} - \sum_{j \in J} \sum_{k \in K} (\mu_{jk} s_{jk}^{\min}+ \nu_{jk} s_{jk}^{\max}) :
		\right.\\
		&
		\quad 
		\left. \phantom{\sum_{(i,j) \in A} c_{ij} y_{ij} }
		\eqref{cons:flow-conservation}, ~
		\eqref{cons:material-balance-ell},~
		\eqref{cons:capacity-node-i}\text{--}
		\eqref{cons:capacity-arc}, ~
		\eqref{cons:material-balance-j-lbP}, ~
		\eqref{cons:material-balance-j-ubP}, ~
		s_{jk}^{\max}, ~s_{jk}^{\min} \ge 0, \ \forall \ j \in J, \ k \in K
		\right\},
	\end{aligned}
\end{equation}
where 
	\begin{align}
	& \sum_{i \in I} \lambda_{ik} y_{ij}
	+ \sum_{\ell \in L} \alpha_{\ell k} y_{\ell j} + s_{jk}^{\min} \ge 
	\lambda_{jk}^{\min} \sum_{i \in I \cup L} y_{ij}, \ \forall \ j \in J, \ k \in K,
	\label{cons:material-balance-j-lbP}\\
	& \sum_{i \in I} \lambda_{ik} y_{ij}
	+ \sum_{\ell \in L} \alpha_{\ell k} y_{\ell j} - s_{jk}^{\max}\le
	\lambda_{jk}^{\max} \sum_{i \in I \cup L} y_{ij}, \ \forall \ j \in J, \ k \in K.
	\label{cons:material-balance-j-ubP}
\end{align}
Under mild conditions,  a local minimum of problem \eqref{prob:pooling-P} is also a local minimum of problem \eqref{prob:pooling-P-penalty}, provided that $\mu_{jk}$ and $\nu_{jk}$ are larger than the optimal Lagrange multipliers for the nonlinear constraints \eqref{cons:material-balance-j-lb} and \eqref{cons:material-balance-j-ub}; see \citet[Chapter 13, Exact Penalty Theorem]{Luenberger2021}.
This, together with the equivalence of problems \eqref{prob:pooling-F-penalty} and \eqref{prob:pooling-P-penalty} and the equivalence of problems \eqref{prob:pooling-F} and \eqref{prob:pooling-P}, motivates us to find a feasible solution for the pooling problem by solving the penalty problem \eqref{prob:pooling-F-penalty}.

The penalty problem \eqref{prob:pooling-F-penalty} can be solved by the \SLP algorithm where problem \eqref{prob:pooling-F-penalty} is still approximated by an \LP problem at a point $y^t$ and the penalty parameters are dynamically updated, as to consider the feasibility of the future iterates. 
To this end, similar to problem \eqref{prob:pooling-F}, the nonlinear terms $\{q_{\ell k}(y)y_{\ell j}\}$ are linearized by the linear functions $\{\tau_{\ell j k} (y)\}$ (defined in \eqref{tmp-approx3}), and the penalty problem \eqref{prob:pooling-F-penalty}  is approximated by the following \LP problem at point $y^t$:

\begin{equation}\label{prob:pooling-F-PSLP}
	\tag{$\text{PSLP}(y^t)$}
	\begin{aligned}
		& \max_{y, \, s} 
		\left\{
		\sum_{(i,j) \in A} w_{ij} y_{ij} - \sum_{j \in J} \sum_{k \in K} (\mu_{jk} s_{jk}^{\min}+ \nu_{jk} s_{jk}^{\max}) : 
		\right.\\
		&
		\qquad 
		\left. \phantom{\sum_{(i,j) \in A} c_{ij} y_{ij} }
		\eqref{cons:flow-conservation}, ~
		\eqref{cons:capacity-node-i}\text{--}
		\eqref{cons:capacity-arc}, ~
		\eqref{yPSLP:material-balance-j-lb}, ~
		\eqref{yPSLP:material-balance-j-ub}, ~
		s_{jk}^{\min}, s_{jk}^{\max} \ge 0, \ \forall \ j \in J, \ k \in K
		\right\},
	\end{aligned}
\end{equation}
where
	\begin{align}
		& \sum_{i \in I} \lambda_{ik} y_{ij}
		+ \sum_{\ell \in L} \tau_{\ell j k}(y) 
		+ s_{jk}^{\min}
		\ge 
		\lambda_{jk}^{\min} \sum_{i \in I \cup L} y_{ij} 
		, \ \forall \ j \in J, \ k \in K,
		\label{yPSLP:material-balance-j-lb}\\
		& \sum_{i \in I} \lambda_{ik} y_{ij}
		+ \sum_{\ell \in L} \tau_{\ell j k}(y) 
		- s_{jk}^{\max}
		\le
		\lambda_{jk}^{\max} \sum_{i \in I \cup L} y_{ij}, \ \forall \ j \in J, \ k \in K.
		\label{yPSLP:material-balance-j-ub}
	\end{align}
Solving the \LP approximation problem \eqref{prob:pooling-F-PSLP}, we obtain a new iterate $y^{t+1}$.
Now, for each $j \in J$ and $k \in K$, if the nonlinear constraint \eqref{cons:material-balance-j-lb-Fy} or \eqref{cons:material-balance-j-ub-Fy} is  violated by the new iterate $y^{t+1}$, 
we increase the penalty parameter $\mu_{jk}$ or  $\nu_{jk}$ by a factor of $\delta > 1$ (as to force the future iterates to be feasible); otherwise, the current $\mu_{jk}$ or  $\nu_{jk}$ is enough to force the corresponding nonlinear constraint to be satisfied and does not need to be updated.  
The overall penalty \SLP algorithm is summarized in \cref{alg:PSLP-y}.

\begin{algorithm}
	\caption{The penalty distributed recursion algorithm}
	\label{alg:PSLP-y}
	\KwIn{Choose an initial solution $y^0$, positive penalty parameters $\mu>0$ and $\nu > 0$, a constant $\delta>1$, and a maximum number of iterations $t_{\max}$.}
	Set $t \leftarrow 0$\;
	\While{$t \le t_{\max}$}{
		Solve the \LP problem \eqref{prob:pooling-F-PSLP} to obtain the solution $(y^{t+1}, s^{t+1})$\;
		\If{$s^{t+1}=0$ and $y^{t+1} = y^t$}{
			\textbf{Stop} with a feasible solution $y^t$ of formulation \eqref{prob:pooling-F}\;
		}
		\For{$j \in J$ and $k \in K$}{
			\lIf{${[s^{t+1}]}_{jk}^{\max} > 0$}{set $\mu_{jk} \leftarrow \delta \mu_{jk}$}
			\lIf{$[s^{t+1}]_{jk}^{\min} > 0$}{set $\nu_{jk} \leftarrow \delta \nu_{jk}$}
		}
		Set $t \leftarrow t + 1$\;
	}	
	\KwRet{$y^{t+1}$}\;
\end{algorithm}

Two remarks on the proposed \cref{alg:PSLP-y} are as follows.
First, \cref{alg:PSLP-y} is a variant of \cref{alg:SLP-y} that uses penalty terms to render the feasibility of the iterates. 
This, together with the equivalence of \cref{alg:SLP-y} and the \DR algorithm, indicates that \cref{alg:PSLP-y} can be interpreted as a penalty version of the \DR algorithm (thus called \PDR algorithm) where the violations of constraints \eqref{DR:material-balance-j-lb} and \eqref{DR:material-balance-j-ub} are penalized in the objective function of \eqref{prob:pooling-F-PSLP}.

Second, for a fixed point $y^t$, the \LP approximation problem  \eqref{prob:pooling-F-SLP} in \cref{alg:SLP-y}  can be seen as a restriction of the \LP approximation problem \eqref{prob:pooling-F-PSLP} in \cref{alg:PSLP-y}  where the slack variables  $s_{jk}^{\min}$ and $s_{jk}^{\max}$ in \eqref{prob:pooling-F-PSLP} are all set to zero.
Therefore, solving problem \eqref{prob:pooling-F-PSLP} can return a solution that has a better objective value than that of   the optimal solution of problem \eqref{prob:pooling-F-SLP}.
As such, the proposed \PDR algorithm can construct a sequence of iterates (i) for which all linear constraints \eqref{cons:flow-conservation} and 
\eqref{cons:capacity-node-i}--\eqref{cons:capacity-arc} are satisfied and (ii) whose objective values are generally larger than those of the iterates constructed by the classic \DR algorithm.
With this favorable feature, the proposed \PDR algorithm is more likely to return a better feasible solution than that returned by the classic \DR algorithm. 
In \cref{sect:compu}, we will further present computational results to verify this.
\section{Computational results}\label{sect:compu}

In this section, we present the computational results to demonstrate the effectiveness of the proposed \PDR algorithm based on formulation \eqref{prob:pooling-F} (i.e., \cref{alg:PSLP-y}) over the \SLP and \DR algorithms based on formulation \eqref{prob:pooling-P} (i.e., \cref{alg:SLP0,alg:DR-pooling-P}) for the pooling problem.
The three algorithms were implemented in Julia 1.9.2 using \gurobi 11.0.1 as the \LP solver. 
The computations were conducted on a cluster of Intel(R) Xeon(R) Gold 6230R CPU @ 2.10 GHz computers.
We solve problem \eqref{mcf} to obtain a point $y^0$ to initialize the three algorithms.

In addition, we set the maximum number of iterations $t_{\max} = 100$ for all the three algorithms.
For the proposed \PDR algorithm, we set parameters $\mu_{jk} = \nu_{jk} = 1$ for all $j \in J$ and $k \in K$ and $\delta = 10$.

\subsection{Computational results on benchmark instances in the literature}

We first compare the performance of the \SLP, \DR, and \PDR algorithms (denoted as settings \testSLP, \testDR, and \testPDR) on the $10$ benchmark instances taken from the literature:
the \texttt{AST1}, \texttt{AST2}, \texttt{AST3}, and \texttt{AST4} instances from \citet{Adhya1999},
the \texttt{BT4} and \texttt{BT5} instances from  \citet{BenTal1994},
the \texttt{F2} instance from \citet{Foulds1992},  and
the \texttt{H1}, \texttt{H2}, and \texttt{H3} instances in \citet{Haverly1978}.

\begin{table}[htbp]
	\centering
	\caption{Computational results on $10$ benchmark instances under settings \testSLP, \testDR, and \testPDR.}
	\tabcolsep=2pt
	\label{table:SLP-YSLP-PYSLP}
	\renewcommand{\arraystretch}{1.2}
	\footnotesize
	\resizebox{\linewidth}{!}{
		\begin{tabular}{@{}lrrrrrrrrrrrrrrrrc@{}}
	\toprule
	\multicolumn{1}{c}{\multirow{2}{*}{\tblid}}
	&\multicolumn{5}{c}{\testSLP}
	&\multicolumn{5}{c}{\testDR}
	&\multicolumn{5}{c}{\testPDR}
	&\multicolumn{1}{c}{\multirow{2}{*}{\tblBV}}
	\\
	\cmidrule(r){2-6}\cmidrule(r){7-11}\cmidrule(r){12-16}
	
	&\multicolumn{1}{c}{\tblT}&\multicolumn{1}{c}{\tblObj}&\multicolumn{1}{c}{\tblIter}&\multicolumn{1}{c}{\tblGap}&\multicolumn{1}{c}{\tblObjAver}
	&\multicolumn{1}{c}{\tblT}&\multicolumn{1}{c}{\tblObj}&\multicolumn{1}{c}{\tblIter}&\multicolumn{1}{c}{\tblGap}&\multicolumn{1}{c}{\tblObjAver}
	&\multicolumn{1}{c}{\tblT}&\multicolumn{1}{c}{\tblObj}&\multicolumn{1}{c}{\tblIter}&\multicolumn{1}{c}{\tblGap}&\multicolumn{1}{c}{\tblObjAver}
	& 
	\\\midrule
	AST1-5-2-4-4        	& $<0.01$	&    0.00	&       2	&   100.0	&     0.0	& $<0.01$	&    0.00	&      12	&   100.0	&    50.1	& $<0.01$	&  340.93	&      10	&    38.0	&    85.8	&  549.80\\ 
	AST2-5-2-4-6        	& $<0.01$	&    0.00	&       2	&   100.0	&     0.0	& $<0.01$	&    0.00	&       8	&   100.0	&    49.9	& $<0.01$	&  509.78	&      14	&     7.3	&    82.8	&  549.80\\ 
	AST3-8-3-4-6        	& $<0.01$	&    0.00	&       2	&   100.0	&     0.0	& $<0.01$	&  561.04	&      12	&     0.0	&    52.9	&    0.05	&  531.05	&     100	&     5.3	&    91.6	&  561.04\\ 
	AST4-8-2-5-4        	& $<0.01$	&  105.00	&       2	&    88.0	&     9.6	& $<0.01$	&  470.83	&       7	&    46.4	&    84.5	& $<0.01$	&  877.65	&      10	&     0.0	&    98.7	&  877.65\\ 
	BT4-4-1-2-1         	& $<0.01$	&  350.00	&       5	&    22.2	&    17.0	& $<0.01$	&  450.00	&       4	&     0.0	&    23.8	& $<0.01$	&  450.00	&       5	&     0.0	&    43.5	&  450.00\\ 
	BT5-5-3-5-2         	&    0.03	& 2865.19	&      37	&    18.1	&    46.0	& $<0.01$	& 3500.00	&       7	&     0.0	&    63.5	&    0.01	& 3500.00	&      18	&     0.0	&    63.0	& 3500.00\\ 
	F2-6-2-4-1          	& $<0.01$	&  600.00	&       4	&    45.5	&    16.2	& $<0.01$	& 1100.00	&       6	&     0.0	&    36.4	& $<0.01$	& 1100.00	&       6	&     0.0	&    56.0	& 1100.00\\ 
	H1-3-1-2-1          	& $<0.01$	&  300.00	&       5	&    25.0	&    14.6	& $<0.01$	&  400.00	&       4	&     0.0	&    21.4	& $<0.01$	&  400.00	&       5	&     0.0	&    41.7	&  400.00\\ 
	H2-3-1-2-1          	& $<0.01$	&  300.00	&       3	&    50.0	&    13.0	& $<0.01$	&  600.00	&       3	&     0.0	&    18.5	& $<0.01$	&  600.00	&       4	&     0.0	&    46.3	&  600.00\\ 
	H3-3-1-2-1          	& $<0.01$	&  750.00	&       5	&     0.0	&    32.7	& $<0.01$	&  750.00	&       5	&     0.0	&    35.9	& $<0.01$	&  750.00	&       6	&     0.0	&    48.7	&  750.00\\ 
	\hline
	\multicolumn{1}{l}{\tblAve}
	&    0.00	&        	&     6.7	&    54.9	&    14.9	&    0.00	&        	&     6.8	&    24.6	&    43.7	&    0.01	&        	&    17.8	&     5.1	&    65.8	&        \\ 
	\bottomrule
\end{tabular}

	}
\end{table}

\cref{table:SLP-YSLP-PYSLP} summarizes the performance results of the three settings \testSLP, \testDR, and \testPDR.
In \cref{table:SLP-YSLP-PYSLP}, we report for each instance the instance id, the numbers of inputs, pools, outputs, and quality attributes (combined in column ``\tblid'') and the optimal value obtained in the literature (column ``\tblBV''), which can be found in, e.g., \citet{Audet2004}.
Moreover, we report, under each setting, 
the total CPU time in seconds (\tblT), 
the objective value of the returned feasible solution (\tblObj), 
the number of iterations (\tblIter), and 
the optimality gap of objective values (\tblGap).
The optimality gap \tblGap is defined by $\frac{o^\ast - o}{o^\ast} \times 100\%$, where $o^\ast$ denotes the optimal value (\tblBV) and $o \in \{o_{\testSLP}, o_{\testDR}, o_{\testPDR}\}$ denotes the objective value returned by the corresponding setting.
To gain more insights of the performance of the three algorithms, we also report the average objective ratio of $\frac{o_t}{o_0}$ under column \tblObjAver, where $o_0$ is the optimal value of problem \eqref{mcf} and $o_t$ is the objective value at iterate $t$ (i.e., 
the optimal value of problem \eqref{prob:pooling-P-SLP}, \eqref{prob:pooling-P-DR}, or \eqref{prob:pooling-F-SLP}).
The average objective ratio $\frac{o_t}{o_0} \in [0,1]$ reflects the objective values of the iterates computed by the algorithms: the larger the $\frac{o(y^t)}{o(y^0)}$, the better (as the algorithm is more likely to construct a feasible solution of problem \eqref{prob:pooling-P} or problem \eqref{prob:pooling-F} with a larger objective value).
At the end of the table, we also report the average CPU time, the average relative gap, and the average ratio $\frac{o_t}{o_0}$.

From the results in \cref{table:SLP-YSLP-PYSLP}, we can conclude that \testPDR performs the best in terms of finding high-quality solutions, followed by \testDR and then \testSLP. 
More specifically, compared with \testSLP which finds an optimal solution for only a single instance, \testDR can find an optimal solution for $7$ instances.
This confirms that \eqref{prob:pooling-P-DR} is indeed a better \LP approximation than \eqref{prob:pooling-P-SLP} around the iterate $y^t$.  
Indeed,
(i) 
\DR uses a more accurate solution $\alpha^{t+1}$ (in terms of guaranteeing the strong relations between the $\alpha$ and $y$ variables) for constructing the next \LP subproblem $(\text{DR}(\alpha^{t+1}, y^{t+1}))$; 
and (ii) compared with the \LP subproblem \eqref{prob:pooling-P-SLP} in the \SLP algorithm, the \LP subproblem \eqref{prob:pooling-P-DR} in the \DR algorithm avoids the addition of the unnecessary constraints in \eqref{SLP:material-balance-ell}, which enlarges the feasible region, thereby enabling to return a solution with a larger objective value (see columns \tblObjAver).
Built upon these advantages of \testDR, the proposed \testPDR performs much better than \testSLP; its overall performance is even better than \testDR. 
In particular, \testPDR achieves relative gaps of $38.0\%$, $7.3\%$, and $0.0\%$ for instances \texttt{AST1}, \texttt{AST2}, and \texttt{AST4}, respectively, while those for \testDR are $100.0\%$, $100.0\%$, and $46.4\%$, respectively.
Indeed, for instances \texttt{AST1} and \texttt{AST2}, \testDR was only able to find the all-zero trivial solution (with a zero objective value).
This can be attributed to the reason that the average ratios under setting \testPDR are generally higher than those under  \testDR, which makes the proposed \testPDR more likely to return a feasible solution with a larger objective value; see \cref{table:SLP-YSLP-PYSLP}.

\subsection{Computational results on randomly generated instances}

In order to gain more insights of the computational performance of the \SLP, \DR, and \PDR algorithms, we perform computational experiments on a set of randomly generated instances.
The instances were constructed using a similar procedure as in \citet{Alfaki2013a} and can be categorized into 5 distinct groups labeled as A, B, C, D, and E, each comprising 10 instances.

The number of inputs, pools, outputs, and quality attributes, denoted as $(|I|, |L|, |J|, |K|)$, for groups A--E, are set to $(3,2,3,2)$, $(5,4,3,3)$, $(8,6,6,4)$, $(12,10,8,5)$, and $(10,10,15,12)$, respectively.
Each pair $(i,j) \in I \times (L \cup J)$ has a probability of $0.5$ to appear in the set of arcs $A$ and all pairs in $L \times J$ are recognized as arcs.

The weights $\{w_{ij}\}$ on arcs are calculated as the difference between the unit cost $c_i$ of purchasing raw materials at the input node $i \in I$ and the unit revenue $c_j$ of selling products at output nodes $j \in J$, i.e., $w_{ij} = c_j - c_i$ for all $(i,j) \in A$.
Here, $c_i$, $i \in I$, and $c_j$, $j \in J$, are uniformly chosen from the $\{0, \dots, 5\}$ and $\{5, \dots, 14\}$, respectively, and $c_{\ell}$, $\ell \in L$, are all set to zeros.

The maximum product demands $u_j$ are randomly chosen from $\{20, \dots, 59\}$; the available raw material supplies $u_i$, $i \in I$, and pool capacities $u_{\ell}$, $\ell \in L$, are set to infinity; 
the maximum flows that can be carried on arcs $(i,j) \in A$ are also set  to infinity, i.e., $u_{ij} = + \infty$.
The qualities of inputs $\lambda_{ik}$, $i \in I$, $k \in K$, are randomly selected from $\{0, \dots, 9\}$.
The upper bounds on attribute qualities at output nodes $\lambda^{\max}_{jk}$, $j \in J$, $k \in K$, are randomly chosen from $\{2, \dots, 6\}$ 
and the lower bounds on attribute qualities at output nodes $\lambda^{\min}_{jk}$, $j \in J$, $k \in K$, are set to zeros.

Detailed performance results on randomly generated instances for  \testSLP, \testDR, and \testPDR are shown in \cref{table:SLP-YSLP-PYSLP-random}.

To evaluate the optimality gap of the feasible solution returned by the three settings, we leverage the global solver \gurobi to compute an optimal solution (or best incumbent) of the problem. 
Specifically, we first apply \gurobi to solve \eqref{prob:pooling-P} (within a time limit of 600 seconds) to obtain a feasible solution with the objective value $o_{\testGRB}$, and then take $o^\ast= \max\{o_{\testSLP}, o_{\testDR}, o_{\testPDR}, o_{\testGRB}\}$
as the best objective value for the computation of optimality gap $\frac{o^\ast - o}{o^\ast} \times 100\%$ (of the feasible solution returned by the three settings).
We use ``--'' (under columns \tblObj and \tblGap) to indicate that no feasible solution is found within the maximum number of iterations.
In \cref{fig:gap}, we further plot the performance profiles of optimality gaps (\tblGap) under the settings \testSLP, \testDR, \testPDR, and \testGRB.

\begin{table}[htbp]
	\centering
	\caption{Computational results on randomly generated instances under settings \testSLP, \testDR, \testPDR, and \testGRB.}
	\label{table:SLP-YSLP-PYSLP-random}
	\tabcolsep=2pt
	\renewcommand{\arraystretch}{1.2}
	\scriptsize
	\resizebox{\linewidth}{!}{
		\begin{tabular}{@{}lrrrrrrrrrrrrrrrrrrc@{}}
	\toprule
	\multicolumn{1}{c}{\multirow{2}{*}{\tblid}}
	&\multicolumn{5}{c}{\testSLP}
	&\multicolumn{5}{c}{\testDR}
	&\multicolumn{5}{c}{\testPDR}
	&\multicolumn{3}{c}{\testGRB}
	\\
	\cmidrule(r){2-6}\cmidrule(r){7-11}\cmidrule(r){12-16}\cmidrule(r){17-19}
	
	&\multicolumn{1}{c}{\tblT}&\multicolumn{1}{c}{\tblObj}&\multicolumn{1}{c}{\tblIter}&\multicolumn{1}{c}{\tblGap}&\multicolumn{1}{c}{\tblObjAver}
	&\multicolumn{1}{c}{\tblT}&\multicolumn{1}{c}{\tblObj}&\multicolumn{1}{c}{\tblIter}&\multicolumn{1}{c}{\tblGap}&\multicolumn{1}{c}{\tblObjAver}
	&\multicolumn{1}{c}{\tblT}&\multicolumn{1}{c}{\tblObj}&\multicolumn{1}{c}{\tblIter}&\multicolumn{1}{c}{\tblGap}&\multicolumn{1}{c}{\tblObjAver}
	&\multicolumn{1}{c}{\tblT}&\multicolumn{1}{c}{\tblObj}&\multicolumn{1}{c}{\tblGap}
	\\\midrule
	A1-3-2-3-2          	& $<0.01$	&  160.71	&       2	&    73.8	&    13.5	& $<0.01$	&  612.94	&       3	&     0.0	&    71.6	& $<0.01$	&  612.94	&       4	&     0.0	&    91.4	&    0.08	&  612.94	&     0.0\\ 
	A2-3-2-3-2          	& $<0.01$	&  250.00	&       2	&    43.8	&    34.2	& $<0.01$	&  250.00	&      10	&    43.8	&    47.6	& $<0.01$	&  415.00	&       7	&     6.7	&    47.6	&   61.42	&  445.00	&     0.0\\ 
	A3-3-2-3-2          	& $<0.01$	&  236.62	&       2	&     3.7	&    58.7	& $<0.01$	&  245.73	&       4	&     0.0	&    72.6	&    0.04	&  245.73	&     100	&     0.0	&    72.6	&    0.29	&  245.73	&     0.0\\ 
	A4-3-2-3-2          	& $<0.01$	&  403.10	&       2	&    36.3	&    54.5	& $<0.01$	&  632.70	&       7	&     0.0	&    91.2	& $<0.01$	&  632.70	&       5	&     0.0	&    92.9	&    0.38	&  632.70	&     0.0\\ 
	A5-3-2-3-2          	& $<0.01$	&    0.00	&       2	&   100.0	&     0.0	& $<0.01$	&    0.00	&       3	&   100.0	&    50.0	& $<0.01$	&  280.29	&       4	&     0.0	&    96.2	&    0.15	&  280.29	&     0.0\\ 
	A6-3-2-3-2          	& $<0.01$	&  530.84	&       3	&     0.0	&    51.8	&    0.01	&  530.84	&      11	&     0.0	&    60.4	& $<0.01$	&  510.32	&       4	&     3.9	&    71.0	&    0.11	&  530.84	&     0.0\\ 
	A7-3-2-3-2          	& $<0.01$	&    0.00	&       2	&   100.0	&     0.0	& $<0.01$	&    0.00	&       2	&   100.0	&     0.0	& $<0.01$	&  341.00	&       4	&     0.0	&    72.0	&    0.20	&  341.00	&     0.0\\ 
	A8-3-2-3-2          	& $<0.01$	& 1536.00	&       2	&     9.9	&    86.3	& $<0.01$	& 1704.00	&       4	&     0.0	&    97.0	& $<0.01$	& 1704.00	&       4	&     0.0	&    97.0	&    2.44	& 1704.00	&     0.0\\ 
	A9-3-2-3-2          	& $<0.01$	&  994.50	&       4	&     5.5	&    87.1	& $<0.01$	& 1052.50	&       4	&     0.0	&    92.4	& $<0.01$	& 1052.50	&       4	&     0.0	&    92.4	&    0.29	& 1052.50	&     0.0\\ 
	A10-3-2-3-2         	& $<0.01$	&  135.00	&       2	&    77.9	&     7.4	& $<0.01$	&  612.00	&       3	&     0.0	&    55.3	& $<0.01$	&  612.00	&       3	&     0.0	&    55.3	&    0.12	&  612.00	&     0.0\\ 
	B1-5-4-3-3          	& $<0.01$	&    0.00	&       2	&   100.0	&     0.0	& $<0.01$	&   97.50	&       5	&    70.9	&    94.8	& $<0.01$	&   97.50	&       5	&    70.9	&    94.8	&\texttt{TL}	&  335.00	&     0.0\\ 
	B2-5-4-3-3          	& $<0.01$	&  300.00	&       2	&    66.9	&    27.6	& $<0.01$	&  905.25	&       5	&     0.0	&    88.1	& $<0.01$	&  905.25	&       5	&     0.0	&    88.1	&\texttt{TL}	&  905.25	&     0.0\\ 
	B3-5-4-3-3          	& $<0.01$	&    0.00	&       2	&   100.0	&     0.0	& $<0.01$	&  159.71	&       4	&     0.0	&    65.6	& $<0.01$	&  159.71	&       8	&     0.0	&    65.6	&\texttt{TL}	&  159.71	&     0.0\\ 
	B4-5-4-3-3          	& $<0.01$	&  668.67	&      18	&     5.5	&    80.5	& $<0.01$	&  674.41	&      12	&     4.7	&    85.6	& $<0.01$	&  675.33	&       6	&     4.5	&    85.5	&    0.72	&  707.33	&     0.0\\ 
	B5-5-4-3-3          	& $<0.01$	&  459.43	&       2	&    16.3	&    33.7	& $<0.01$	&  470.40	&       4	&    14.3	&    76.1	& $<0.01$	&  470.40	&       4	&    14.3	&    76.1	&\texttt{TL}	&  548.57	&     0.0\\ 
	B6-5-4-3-3          	& $<0.01$	&  795.00	&       2	&    23.7	&    63.4	& $<0.01$	&  823.20	&       6	&    21.0	&    85.9	& $<0.01$	&  823.20	&       8	&    21.0	&    90.2	&\texttt{TL}	& 1042.55	&     0.0\\ 
	B7-5-4-3-3          	& $<0.01$	&  638.00	&       3	&    57.7	&    52.9	& $<0.01$	& 1028.00	&       4	&    31.9	&    89.4	& $<0.01$	& 1028.00	&       5	&    31.9	&    93.5	&  336.62	& 1509.00	&     0.0\\ 
	B8-5-4-3-3          	& $<0.01$	&  663.00	&       2	&    33.7	&    49.1	& $<0.01$	& 1000.00	&       8	&     0.0	&    93.9	& $<0.01$	& 1000.00	&      12	&     0.0	&    93.9	&\texttt{TL}	& 1000.00	&     0.0\\ 
	B9-5-4-3-3          	& $<0.01$	&  352.00	&       2	&    29.6	&    24.6	& $<0.01$	&  499.69	&       7	&     0.0	&    95.7	& $<0.01$	&  499.69	&       7	&     0.0	&    95.7	&\texttt{TL}	&  499.69	&     0.0\\ 
	B10-5-4-3-3         	& $<0.01$	&  827.40	&       8	&     4.7	&    67.8	& $<0.01$	&  868.00	&      14	&     0.0	&    85.9	& $<0.01$	&  868.00	&      10	&     0.0	&    82.6	&    0.01	&  868.00	&     0.0\\ 
	C1-8-6-6-4          	&    0.09	&     ---	&     100	&     ---	&    77.8	&    0.15	& 1828.07	&      83	&     0.0	&    85.2	&    0.02	& 1828.07	&      12	&     0.0	&    85.2	&\texttt{TL}	& 1828.07	&     0.0\\ 
	C2-8-6-6-4          	&    0.17	&     ---	&     100	&     ---	&    47.5	&    0.10	& 1011.54	&      46	&    19.1	&    62.2	&    0.02	& 1011.54	&      14	&    19.1	&    65.4	&\texttt{TL}	& 1250.97	&     0.0\\ 
	C3-8-6-6-4          	&    0.10	&     ---	&     100	&     ---	&    73.2	&    0.02	& 1273.58	&      15	&    23.1	&    65.8	&    0.05	& 1273.58	&      39	&    23.1	&    76.7	&\texttt{TL}	& 1656.52	&     0.0\\ 
	C4-8-6-6-4          	& $<0.01$	&    0.00	&       2	&   100.0	&     0.0	& $<0.01$	&  220.80	&       6	&     6.2	&   100.0	& $<0.01$	&  220.80	&       5	&     6.2	&   100.0	&\texttt{TL}	&  235.42	&     0.0\\ 
	C5-8-6-6-4          	&    0.12	&     ---	&     100	&     ---	&    22.9	&    0.02	&  653.55	&      16	&     0.0	&    43.0	&    0.04	&  653.55	&      33	&     0.0	&    44.9	&\texttt{TL}	&  652.48	&     0.2\\ 
	C6-8-6-6-4          	& $<0.01$	& 1056.49	&       2	&     2.7	&    40.8	& $<0.01$	& 1085.33	&       6	&     0.0	&    91.3	& $<0.01$	& 1085.33	&       6	&     0.0	&    91.3	&\texttt{TL}	& 1082.92	&     0.2\\ 
	C7-8-6-6-4          	&    0.03	&  584.29	&      26	&    48.7	&    30.1	&    0.03	& 1139.93	&      28	&     0.0	&    62.8	&    0.01	& 1139.93	&      10	&     0.0	&    60.9	&\texttt{TL}	& 1132.68	&     0.6\\ 
	C8-8-6-6-4          	&    0.02	& 1303.20	&      19	&    21.3	&    63.3	&    0.03	& 1303.20	&      31	&    21.3	&    69.7	&    0.02	& 1303.20	&      17	&    21.3	&    71.7	&\texttt{TL}	& 1655.83	&     0.0\\ 
	C9-8-6-6-4          	&    0.05	&  780.13	&      36	&    39.8	&    64.7	& $<0.01$	& 1295.22	&       5	&     0.0	&    80.7	& $<0.01$	& 1192.98	&       4	&     7.9	&    79.4	&\texttt{TL}	& 1292.55	&     0.2\\ 
	C10-8-6-6-4         	& $<0.01$	& 2635.85	&       5	&     2.3	&    78.1	&    0.04	& 2698.76	&      35	&     0.0	&    85.8	&    0.02	& 2698.76	&      13	&     0.0	&    85.6	&\texttt{TL}	& 2698.76	&     0.0\\ 
	D1-12-10-8-5        	&    0.18	&     ---	&     100	&     ---	&    99.8	&    0.01	& 3006.86	&       6	&     0.2	&    99.9	&    0.01	& 3006.86	&       6	&     0.2	&    99.9	&   13.77	& 3013.00	&     0.0\\ 
	D2-12-10-8-5        	&    0.04	&  540.00	&      14	&    60.4	&    30.5	&    0.01	&  540.00	&       7	&    60.4	&    54.7	&    0.04	&  540.00	&       9	&    60.4	&    69.2	&\texttt{TL}	& 1364.01	&     0.0\\ 
	D3-12-10-8-5        	&    0.25	&     ---	&     100	&     ---	&    83.0	&    0.16	& 2343.06	&      50	&     7.1	&    83.4	&    0.35	& 2521.03	&     100	&     0.0	&    84.9	&\texttt{TL}	& 2521.03	&     0.0\\ 
	D4-12-10-8-5        	&    0.31	&     ---	&     100	&     ---	&    10.3	& $<0.01$	&  270.05	&       5	&    73.4	&    44.9	&    0.02	&  270.05	&      11	&    73.4	&    45.5	&\texttt{TL}	& 1013.98	&     0.0\\ 
	D5-12-10-8-5        	&    0.26	&     ---	&     100	&     ---	&    77.1	&    0.47	&     ---	&     100	&     ---	&    87.6	&    0.51	&     ---	&     100	&     ---	&    87.6	&\texttt{TL}	& 2949.09	&     0.0\\ 
	D6-12-10-8-5        	&    0.13	& 1114.17	&      33	&    41.4	&    41.4	&    0.49	&     ---	&     100	&     ---	&    72.4	&    0.63	&     ---	&     100	&     ---	&    72.5	&\texttt{TL}	& 1901.61	&     0.0\\ 
	D7-12-10-8-5        	&    0.27	&     ---	&     100	&     ---	&    59.0	&    0.40	&     ---	&     100	&     ---	&    63.1	&    0.48	&     ---	&     100	&     ---	&    62.5	&\texttt{TL}	& 2204.61	&     0.0\\ 
	D8-12-10-8-5        	&    0.16	&     ---	&     100	&     ---	&    69.5	&    0.24	&     ---	&     100	&     ---	&    91.6	&    0.03	& 3500.48	&      10	&     0.0	&    94.2	&\texttt{TL}	& 3357.58	&     4.1\\ 
	D9-12-10-8-5        	&    0.18	&     ---	&     100	&     ---	&    20.1	&    0.03	&  785.53	&      16	&     0.0	&    34.8	&    0.08	&  785.53	&      44	&     0.0	&    34.9	&\texttt{TL}	&  782.89	&     0.3\\ 
	D10-12-10-8-5       	&    0.22	&     ---	&     100	&     ---	&    71.8	&    0.07	& 3025.13	&      41	&     0.0	&    79.8	&    0.13	& 3025.13	&      49	&     0.0	&    86.9	&\texttt{TL}	& 3025.13	&     0.0\\ 
	E1-10-10-15-12      	&    0.10	&  317.47	&      17	&     3.3	&     4.4	&    0.02	&  328.17	&       5	&     0.0	&    43.1	&    0.04	&  328.17	&       8	&     0.0	&    44.9	&\texttt{TL}	&  328.17	&     0.0\\ 
	E2-10-10-15-12      	& $<0.01$	&    0.00	&       2	&   100.0	&     0.0	&    0.03	&  728.99	&       5	&     0.0	&   100.0	&    0.05	&  728.99	&       7	&     0.0	&   100.0	&\texttt{TL}	&  331.50	&    54.5\\ 
	E3-10-10-15-12      	& $<0.01$	&    0.00	&       2	&   100.0	&     0.0	&    0.03	&    0.00	&       5	&   100.0	&    88.8	&    0.08	&  171.37	&       6	&     0.0	&    88.8	&\texttt{TL}	&  171.37	&     0.0\\ 
	E4-10-10-15-12      	& $<0.01$	&  260.67	&       2	&     1.4	&     4.4	&    0.05	&  264.50	&       9	&     0.0	&    83.7	&    0.08	&  264.50	&       7	&     0.0	&    83.7	&\texttt{TL}	&  264.50	&     0.0\\ 
	E5-10-10-15-12      	&    0.75	&     ---	&     100	&     ---	&     6.0	&    0.03	&  396.41	&       5	&     0.0	&    38.1	&    0.07	&  396.41	&      11	&     0.0	&    41.9	&\texttt{TL}	&  383.20	&     3.3\\ 
	E6-10-10-15-12      	& $<0.01$	&    0.00	&       2	&   100.0	&     0.0	&    0.04	&    0.00	&       5	&   100.0	&   100.0	&    0.06	&  498.72	&       7	&     0.0	&   100.0	&\texttt{TL}	&  498.72	&     0.0\\ 
	E7-10-10-15-12      	& $<0.01$	&    0.00	&       2	&   100.0	&     0.0	&    0.03	&    0.00	&       6	&   100.0	&   100.0	&    0.06	&  635.51	&       9	&     0.0	&   100.0	&\texttt{TL}	&  635.51	&     0.0\\ 
	E8-10-10-15-12      	&    0.05	&  386.22	&       7	&    56.0	&     9.9	&    0.04	&  386.22	&       5	&    56.0	&    61.2	&    0.14	&  876.89	&      19	&     0.0	&    67.3	&\texttt{TL}	&  876.89	&     0.0\\ 
	E9-10-10-15-12      	&    0.01	&    0.00	&       2	&   100.0	&     0.0	&    0.03	&  323.88	&       4	&     0.0	&    89.7	&    0.07	&  323.88	&      12	&     0.0	&    89.7	&\texttt{TL}	&    0.00	&   100.0\\ 
	E10-10-10-15-12     	& $<0.01$	&  299.22	&       2	&     3.6	&     5.0	&    0.02	&  310.37	&       5	&     0.0	&    92.9	&    0.04	&  310.37	&       7	&     0.0	&    92.9	&\texttt{TL}	&  310.37	&     0.0\\ 
	\hline
	\multicolumn{1}{l}{\tblAve}
	&    0.07	&        	&    30.8	&    35.4	&    37.7	&    0.05	&        	&    19.4	&    19.1	&    75.1	&    0.07	&        	&    19.7	&     7.3	&    79.3	&  440.33	&        	&     3.3\\ 
	\bottomrule
\end{tabular}

	}
\end{table}

\begin{figure}[htbp]
	\centering
	\includegraphics[width=.7\linewidth]{./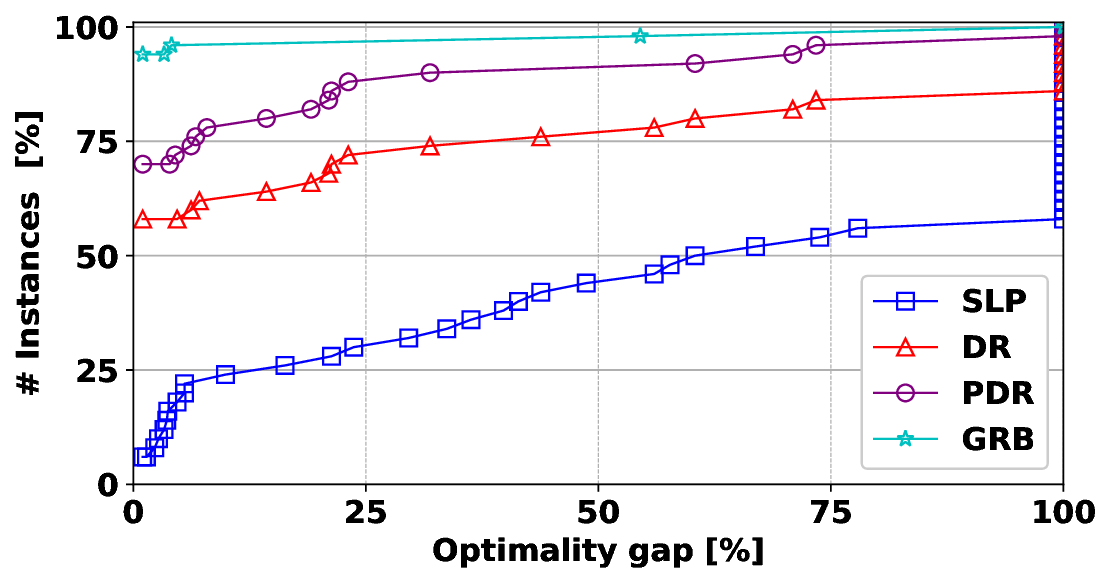}
	\caption{Performance profiles of the optimality gap of the feasible solution returned by settings \testSLP, \testDR, \testPDR, and \testGRB.}
	\label{fig:gap}
\end{figure}

From \cref{table:SLP-YSLP-PYSLP-random}, we can further confirm that \testPDR outperforms both \testDR and \testSLP in terms of finding high-quality solutions on these randomly generated instances.
In particular, compared to those returned \testSLP and \testDR, the average objective ratios $\frac{o_t}{o_0} $ returned by \testPDR are generally much larger, thereby rendering \testPDR to find better feasible solutions.
This observation is more intuitively depicted in \cref{fig:gap} where the purple-circle line corresponding to setting \testPDR is much higher than red-{triangle} and blue-square lines corresponding to settings \testDR and \testSLP, respectively.   

Another observation from \cref{table:SLP-YSLP-PYSLP-random} and \cref{fig:gap} is that 

\testPDR is able to efficiently find high-quality feasible solutions.
Indeed, 
(i) the average CPU time returned by \testPDR is only $0.07$ seconds, which is much smaller than that returned by \testGRB;
and 
(ii) \testPDR can find the optimal solution for a fairly large number of instances (more than $70\%$ instances, as shown in \cref{fig:gap}), and for instances \texttt{D8}, \texttt{E2}, \texttt{E5}, and \texttt{E9}, \testDR can even find much better solutions than \testGRB.

These results shed an useful insight that the proposed \PDR algorithm can potentially be embedded into a global solver as a heuristic component to enhance the solver's capabilities (as they can provide better lower bounds to help prune the branch-and-bound tree).

\section{Conclusions}\label{sect:conclusion}

In this paper, we have proposed a new \NLP formulation for the pooling problem, which involves only the flow variables and thus is much more amenable to the algorithmic design. 
In particular, we have shown that the well-known \DR algorithm can be seen as a direct application of the \SLP algorithm to the newly proposed formulation.
With this new useful insight, we have developed a new variant of \DR algorithm, called \PDR algorithm, based on the proposed \NLP formulation. 
The proposed \PDR algorithm is a penalty algorithm where the violations of the (linearized) nonlinear constraints are penalized in the objective function of the \LP approximation problem with the penalty terms increasing when the constraint violations tend to be large.

Moreover, the \LP approximation problems in the proposed \PDR algorithm can construct a sequence of iterates (i) for which all linear constraints in the \NLP formulation are satisfied and (ii) whose objective values are generally larger than the objective values of those constructed in the classic \DR algorithm.
This feature makes the \PDR algorithm more likely to find a better feasible solution for the pooling problem.
By computational experiments, we show the advantage of the proposed \PDR over the classic \SLP and \DR algorithms in terms of finding a high-quality solution for the pooling problem.

\bibliography{oil}
\newpage

\end{document}